\documentclass[oneside, 10pt]{amsart}
\usepackage{preamble}

\numberwithin{equation}{section}

\begin{document}

\title[]{Direct limits of large orbits and the Knaster continuum homeomorphism group}

\author{Sumun Iyer}
\address{Cornell University}
\email{ssi22@cornell.edu}
\blfootnote{Research supported by NSF GRFP grant DGE – 2139899}

\begin{abstract}
The main result is that the group $\Homeo (K)$ of homeomorphisms of the universal Knaster continuum contains an open subgroup with a comeager conjugacy class. Actually, this open subgroup is the very natural subgroup consisting of \emph{degree-one} homeomorphisms.  We give a general fact about finding comeager orbits in Polish group actions which are approximated densely by direct limits of actions with comeager orbits. The main theorem comes as a result of this fact and some finer analysis of the conjugacy action of the group $\Ho$.
\end{abstract}

\maketitle

\bigskip


\noindent 

\section{Introduction}
This paper concerns a well-studied property of some Polish groups--the existence of a comeager conjugacy class, sometimes also called the \emph{strong topological Rokhlin property} (see \cite{glasnerweiss} for a survey on the topic). This is a very strong property to have, it says that from a topological perspective basically the entire group is a single conjugacy class. It is exclusively a large Polish group phenomenon--non-trivial locally compact Polish groups cannot have a comeager conjugacy class (see \cite{wesolek}). There is a well-developed theory of this phenomenon for non-archimedean Polish groups (these are closed subgroups of the infinite permutation group $S_\infty$). Work of Truss and then of Kechris and Rosendal give combinatorial properties which characterize exactly when a non-archimedean Polish group has a dense or comeager conjugacy class (see \cite{truss}, \cite{kr}). There are by now many examples of non-archimedean groups known to have or not have a comeager conjugacy class, see \cite{truss}, \cite{kr}, \cite{bcmg1}, \cite{bcmg2}, \cite{kusketruss}. 

Our interest here is in homeomorphism groups of connected spaces and in this setting much less is known. In fact the only known result concerning existence or non-existence of a comeager conjugacy class for groups of this sort appears to be the following:

\begin{thm}[Hjorth \cite{hjorth}]
    The group of homeomorphisms of the interval contains an open subgroup with a comeager conjugacy class. Actually, the open subgroup is the group $\Ho$ of order-preserving homeomorphisms.
\end{thm}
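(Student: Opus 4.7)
The plan is to classify conjugacy in $\Ho$ by a combinatorial invariant and then show that this invariant takes a single value on a comeager set. To each $f \in \Ho$ associate its fixed-point set $F_f = \{x \in [0,1] : f(x) = x\}$, a closed subset of $[0,1]$ containing $\{0,1\}$, together with a sign function $\sigma_f$ assigning to each connected component $J$ of $[0,1] \setminus F_f$ the symbol $+$ if $f|_J > \mathrm{id}$ and $-$ otherwise. A direct one-dimensional argument shows that $f, g \in \Ho$ are conjugate iff there is an order-preserving self-homeomorphism $h$ of $[0,1]$ with $h(F_f) = F_g$ intertwining $\sigma_f$ and $\sigma_g$; this reduces the problem to finding a comeager value for the invariant $(F_f, \sigma_f)$.

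Standard Baire-category facts in the Hausdorff-metric hyperspace, paired with small-bump perturbations in $\Ho$ that insert or destroy fixed points, show that $\{f : F_f \text{ is a Cantor set containing } \{0,1\}\}$ is a dense $G_\delta$ in $\Ho$. On this comeager set, order-preserving bijections between $F_f$'s are automatically homeomorphisms, avoiding any topological subtlety at limit points. A further countable conjunction of local perturbation arguments then forces $\sigma_f$ to be \emph{dense of both signs}: for every open subinterval of $[0,1]$ meeting $[0,1] \setminus F_f$, both a $+$ and a $-$ component of $[0,1] \setminus F_f$ lie inside it.

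A back-and-forth argument then shows that any two pairs $(F,\sigma)$ and $(F',\sigma')$ of this generic type are isomorphic via an order-preserving self-homeomorphism of $[0,1]$: enumerate countable dense subsets of $F$ and $F'$, match them in order, and use density of both signs on each side to arrange that the induced correspondence on complementary intervals respects $\sigma$. Combined with the classification above, this yields a single comeager conjugacy class in $\Ho$. Since $\Ho = \{f \in \Homeo([0,1]) : f(0) = 0\}$ is clopen in the full homeomorphism group, it is in particular open, completing the argument.

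The main obstacle is verifying the third comeagerness claim: when inserting new fixed points of $f$ inside a complementary interval by a small perturbation, one must have enough freedom to prescribe the signs on the newly created subintervals while leaving the structure outside untouched and staying inside a prescribed uniform neighborhood of $f$. Executing this sign-controlled perturbation explicitly, and packaging it as a dense-open condition indexed by pairs of rationals, is the technical heart of the proof.
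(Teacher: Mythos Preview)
The paper does not give its own proof of this theorem; it is quoted from Hjorth and supplemented later by two cited facts: the conjugacy classification via the marked linear order $(L_f,m_f)$ (attributed to \cite{hjorth}) and the explicit description of the generic element in Theorem~\ref{thm2} (attributed to \cite{kr} and \cite{glasnerweiss}). Your proposal reproduces exactly this line of argument: your invariant $(F_f,\sigma_f)$ is the paper's $(L_f,m_f)$ together with $\fix(f)$, your classification step is the cited Hjorth theorem, and your generic type ``$F_f$ a Cantor set with both signs dense'' coincides with conditions (1)--(3) of Theorem~\ref{thm2} (note that $\fix(f)$ perfect and totally disconnected is equivalent to $L_f$ being dense without endpoints together with $\fix(f)$ totally disconnected). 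The back-and-forth you sketch is the standard way to see that any two such marked orders are isomorphic. So your approach is correct and is essentially the same as the one the paper records via its citations.
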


The goal of the present paper is to provide an example of a much more complicated connected space whose homeomorphism group contains an open subgroup with comeager conjugacy class. Knaster continua are a class of indecomposable continua. A continuum is \emph{indecomposable} if it cannot be written as the union of two proper, non-trivial subcontinua. Indecomposability is, generally, an indicator of complexity of the continuum (most basic continua like the arc $[0,1]$ and cubes $[0,1]^n$ are decomposable). We focus on the \emph{universal Knaster continuum}, $K$-- where \emph{universal} means that $K$ continuously and openly surjects onto every other Knaster continuum (precise definitions may be found in Section \ref{secbasic}). Equipped with the uniform-convergence topology, $\Homeo(K)$ is a non-locally compact Polish group. Here is the main theorem of the paper:
\begin{thm}\label{thm_0}
    The group $\Homeo(K)$ contains an open subgroup with a comeager conjugacy class. 
\end{thm}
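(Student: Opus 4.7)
My plan is to apply the general criterion for comeager orbits under direct-limit approximations (announced in the abstract) to the conjugacy action of the degree-one subgroup, bootstrapping off Hjorth's theorem for $\Ho$.

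\textbf{Setup of the open subgroup.} I would start by fixing a representation $K = \varprojlim_n ([0,1], f_n)$ with open bonding maps (concretely, tent maps). Each $h \in \Homeo(K)$ carries a natural degree invariant, coming for example from its induced action on $\check{H}^1(K) \cong \mathbb{Z}[1/2]$, or equivalently from the induced action on the composant/arc structure. Let $H \leq \Homeo(K)$ be the subgroup of degree-one homeomorphisms. Since the degree is a continuous homomorphism into a discrete group (with image contained in $\{\pm 2^k : k \in \mathbb{Z}\}$), $H$ is open, so it is enough to find a comeager conjugacy class in $H$.

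\textbf{Direct-limit approximation.} For each $n$, let $G_n \leq H$ consist of the homeomorphisms of $K$ that descend, via the inverse system, from a homeomorphism of the $n$-th interval. Each $G_n$ is essentially a copy of $\Ho$, and the inclusions $G_n \hookrightarrow G_{n+1}$ are the lifts along the bonding maps. The union $\bigcup_n G_n$ is dense in $H$, so $H$ is a Polish completion of $\varinjlim_n G_n$. By Hjorth's theorem each $G_n$ has a comeager conjugacy class $C_n$, and I would then feed this chain into the general criterion to obtain a comeager conjugacy class in $H$ (which, being an open subgroup of $\Homeo(K)$, gives Theorem \ref{thm_0}).

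\textbf{The main obstacle.} The crux is verifying the compatibility hypothesis that the abstract criterion demands: that the generic classes $C_n$ interact coherently under the lifting $G_n \hookrightarrow G_{n+1}$, so that a generic element at level $n$ approximates, or is absorbed into, a generic element at level $n+1$. Hjorth's description of the generic $h \in \Ho$ is quite rigid combinatorially (the fixed-point set and the attract/repel structure on complementary intervals), and the two-to-one nature of the bonding map doubles this combinatorial profile at each stage. I would expect to reconstruct, level by level, how the Hjorth normal form pulls back through the tent map, and to show that this pullback operation preserves the generic profile on a comeager set. This is the ``finer analysis of the conjugacy action of $\Ho$'' promised in the abstract, and controlling the interplay between the inverse-system dynamics and the generic order-preserving dynamics is where I expect the real work to lie.
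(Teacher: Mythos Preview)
Your high-level strategy matches the paper's: work in the degree-one subgroup, approximate it by a chain of copies of $\Ho$ embedded via lifts along the bonding maps, invoke Hjorth at each level, and feed this into the abstract direct-limit criterion. Two corrections and one substantive gap are worth flagging.

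\textbf{Minor factual issues.} You are implicitly working with the dyadic bucket-handle rather than the \emph{universal} Knaster continuum. For the universal $K$ the bonding maps are tent maps $T_{p_n}$ with every prime appearing infinitely often, the degree homomorphism lands in $\mathbb{Q}^\times_{>0}$ (not $\{\pm 2^k\}$), and $\check{H}^1(K)$ is not $\mathbb{Z}[1/2]$. None of this changes the architecture of the argument, but the details of the lifts $G_n \hookrightarrow G_{n+1}$ depend on the actual primes.

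\textbf{The real obstacle.} You locate the difficulty in showing that the Hjorth generic profile is preserved under the lift $f \mapsto \oplus^d f$. In fact that step is routine (the paper's Lemma~\ref{lem2}): the fixed-point set and the $\pm$-coloring of complementary intervals behave transparently under $\oplus^d$, and this already yields a \emph{dense} conjugacy class in $\Homeo^1(K)$. What it does \emph{not} give is comeagerness, and this is where the abstract criterion is more demanding than ``the $C_n$ are coherent.'' The criterion (Proposition~\ref{thm_dl_crit}) requires uniform \emph{metric} control: for each $\epsilon$-ball around the identity in $G_n$, one needs a lower bound $\alpha_n$ on the radius of the ball in which $B(1,\epsilon)$-conjugates of the generic element are dense, together with a uniform positive lower bound on the co-modulus of the inclusions $G_n \hookrightarrow \Homeo^1(K)$ at radius $\alpha_n$. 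The substantive work in the paper is (i) Lemma~\ref{mainlem}, which shows that if $B(1,\epsilon)$-conjugates of $f$ are dense in $B(f,\delta)$ (with mild side conditions on fixed points near $0$ and $1$), then $B(1,\epsilon/d)$-conjugates of $\oplus^d f$ are dense in $B(\oplus^d f,\delta/d)$; and (ii) Lemma~\ref{lem3}, a combinatorial fact about tent-map orbits that converts the shrinking $\alpha_n = \delta/(p_{j+1}\cdots p_n)$ back into a \emph{fixed} lower bound after pushing down to a fixed coordinate of $K$. Your outline does not anticipate either of these quantitative ingredients, and without them the abstract criterion cannot be invoked.
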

Theorem \ref{thm_0} is proven in Section \ref{sec:main_thm} (as Theorem \ref{thm_main}). Just as for $\Homeo[0,1]$, the open subgroup in Theorem \ref{thm_0} is a natural one. It is the subgroup of \emph{degree-one homeomorphisms} of $K$, which we denote by $\Hk$. There is a notion of degree for homeomorphisms of Knaster continua due to D\k{e}bski which is conjugacy invariant \cite{debski}. For now, it ought to be thought of loosely as something like rotation number for circle homeomorphisms, precise definitions will be given in Section \ref{secbasic}.

Now we say more about the proof of Theorem \ref{thm_0}. The proof depends crucially on the fact that there is a particular countable chain of copies of $\Homeo_+[0,1]$ inside of $\Homeo^1(K)$. These copies sit in $\Homeo^1(K)$ as small (nowhere-dense) sets and the union of the copies of $\Ho$--call this union $H$ for the moment--is dense and meager in $\Homeo^1(K)$. In spite of the fact that $H$ is topologically small inside $\Homeo^1(K)$, the remnant of genericity from $\Homeo_+[0,1]$ propagated ``up the countable chain'' is enough to get Theorem \ref{thm_0}. 

Actually Theorem \ref{thm_0} can be phrased as a consequence of a general fact about Polish group actions which are densely approximated by direct limits of actions with comeager orbits that satisfy certain metric conditions. Checking these metric conditions is non-trivial. For our application, it involves (1) quantitive analysis of how elements of $\Ho$ behave under conjugacy by elements close to the identity and (2) a lemma about orbits of points under the tent map dynamical system (Lemma \ref{lem3}).

The paper is structured as follows: Section \ref{sec:gen_crit} formulates the general criterion about direct limits of Polish group actions with comeager orbits. Section \ref{secbasic} has some basic facts about Knaster continua. Section \ref{sec:homeo_int} analyzes the conjugacy action in $\Ho$ and Section \ref{sec:main_thm} contains the proof of Theorem \ref{thm_0}.

\subsection{Acknowledgements:} I would like to thank Sławek Solecki for many helpful conversations and suggestions related to this project.

\section{Comeager orbits in direct limits of Polish group actions}\label{sec:gen_crit}

This section gives a general strategy for producing a comeager orbit in a Polish group action which is approximated by a direct limit. Given a Polish group $G$, a \emph{Polish $G$-space} is a Polish space $X$ equipped with a continuous action of $G$. The $G$-space is \emph{topologically transitive} if for any $U,V$ open nonempty subsets of $X$, there is $g \in G$ such that $g \cdot U \cap V \neq \emptyset$. Of course if $X$ has a dense orbit, then it is topologically transitive.

First, a condition for comeager orbits due to Rosendal. A proof may be found in \cite{bymt}, Proposition 3.2. 

\begin{lem} [Rosendal] \label{lem_rosendal}
    Let $X$ be a topologically transitive Polish $G$-space. The following are equivalent:
    \begin{enumerate}
        \item $X$ has a comeager orbit
        \item For any open $V \ni 1_G$ and open, nonempty $U \subset X$, there exists non-empty $U' \subseteq U$ so that for any $W_1,W_2$ nonempty open subsets of $U'$ we have $V \cdot W_1 \cap W_2 \neq \emptyset$.
    \end{enumerate}
\end{lem}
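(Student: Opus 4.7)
The plan is to prove the two implications separately; $(1) \Rightarrow (2)$ is a short consequence of Effros's theorem, while $(2) \Rightarrow (1)$ is the substantive direction and relies on a Baire-category and back-and-forth construction.

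For $(1) \Rightarrow (2)$, fix $x_0 \in X$ with comeager orbit. Since $G x_0$ is non-meager, Effros's theorem gives that the orbit map $G/\mathrm{Stab}(x_0) \to G x_0$ is a homeomorphism when $G x_0$ is given the subspace topology from $X$. Given $V \ni 1_G$ open and $U \subseteq X$ open nonempty, choose $g_0 \in G$ with $g_0 x_0 \in U$, and a symmetric open $V' \ni 1_G$ with $g_0 V' (V')^{-1} g_0^{-1} \subseteq V$. Openness of the orbit map supplies an open $U' \subseteq U$ containing $g_0 x_0$ with $U' \cap G x_0 \subseteq g_0 V' \cdot x_0$. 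Any nonempty open $W_1, W_2 \subseteq U'$ meet the comeager orbit, yielding $w_i = g_0 v_i x_0 \in W_i$ with $v_i \in V'$; the element $v := g_0 v_2 v_1^{-1} g_0^{-1}$ lies in $V$ and satisfies $v w_1 = w_2$, so $V W_1 \cap W_2 \ni w_2$.

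For $(2) \Rightarrow (1)$, fix a symmetric decreasing neighborhood base $\{V_n\}$ at $1_G$ with $V_{n+1}^2 \subseteq V_n$, and call an open $U' \subseteq X$ \emph{$V_n$-good} if $V_n W_1 \cap W_2 \neq \emptyset$ for all nonempty open $W_1, W_2 \subseteq U'$. Any open subset of a $V_n$-good set is again $V_n$-good, and by (2) the union $D_n$ of all $V_n$-good opens is dense open; hence $D := \bigcap_n D_n$ is comeager $G_\delta$. Intersecting $D$ with the comeager set of points having dense orbit (obtained from topological transitivity), it remains to show that any $x, y \in D$ lie in the same orbit. The strategy is to construct $g \in G$ with $g x = y$ as the limit of a sequence $g_n \in G$ that is Cauchy in a complete compatible metric on $G$ and satisfies $g_n x \to y$ in $X$.

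The main technical obstacle is that condition (2) provides $V_n W_1 \cap W_2 \neq \emptyset$ only for pairs of open sets, whereas each inductive step needs to move the specific point $g_n x$ into a prescribed $V_{n+1}$-good open neighborhood $U'_{n+1}$ of $y$. This is bridged via continuity of the action: given $V_n$-good $U'_n$ and a candidate $U'_{n+1}$, the set $\{(v, z) \in V_n \times U'_n : v z \in U'_{n+1}\}$ is open and (by $V_n$-goodness) nonempty, so contains a product box $V'_n \times W$ with $V'_n \cdot W \subseteq U'_{n+1}$. One exploits the freedom to choose $U'_{n+1}$, together with the standard alternation between left- and right-invariant control on $g_n$, to ensure that both $g_n$ and $g_n^{-1}$ are Cauchy, producing a limit $g \in G$. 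Making this bookkeeping go through while maintaining the invariant $g_n x \in U'_n$ across all steps is the most delicate aspect of the argument.
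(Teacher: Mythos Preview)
The paper does not supply its own proof of this lemma: immediately before the statement it writes ``A proof may be found in \cite{bymt}, Proposition 3.2,'' and no argument follows. So there is nothing in the paper to compare your proposal against; the lemma is quoted as a black box and only \emph{used} (in the proof of Lemma~\ref{lem_dense_condition}).

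On the substance of your sketch: the $(1)\Rightarrow(2)$ direction via Effros is correct and is exactly the mechanism the paper itself invokes one lemma later. For $(2)\Rightarrow(1)$, your plan---define $V_n$-good sets, intersect the dense-open unions $D_n$ to get a comeager $D$, and then run a Cauchy/back-and-forth construction to place any two points of $D$ in a common orbit---is a viable route, and you correctly isolate the genuine obstacle, namely that condition~(2) moves open sets rather than individual points. Your proposed fix (pass to a product box $V'_n\times W$ via continuity, and exploit the freedom in choosing the next $V_{n+1}$-good set) is the right idea, but as written it does not yet guarantee the invariant $g_n x\in W$ at each step; one has to interleave the choice of the next good set with the choice of $W$ so that $W$ is picked \emph{around} $g_n x$ first and the target good set is then taken inside $V_n\cdot W$. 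That reordering, together with the left/right alternation you mention, is what makes the bookkeeping close. Since the paper defers entirely to \cite{bymt}, if you want a fully worked version to check your construction against, that reference is the place to look.
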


The condition below is most convenient for what we want to prove in this section; it it nearly just a restatement of Lemma \ref{lem_rosendal} above.

\begin{lem}\label{lem_dense_condition}
    A Polish $G$-space $X$ has a comeager orbit if and only if there is a dense set $D \subseteq X$ such that $D$ is contained in a single orbit and for any open $V \ni 1_G$ and any $x \in D$, $V \cdot x$ is dense in a neighborhood of $x$.
\end{lem}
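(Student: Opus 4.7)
The plan is to derive both directions from Rosendal's criterion, Lemma \ref{lem_rosendal}.

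For the forward direction, I would suppose $O \subseteq X$ is a comeager orbit and simply take $D := O$; density of $D$ is automatic from comeagerness, and $D$ trivially lies in a single orbit. For the remaining condition, fix $x \in D$ and open $V \ni 1_G$. Since $O$ is non-meager, Effros's theorem says the orbit map $g \mapsto g\cdot x$ from $G$ onto $O$ (with the subspace topology inherited from $X$) is open, so $V\cdot x = W \cap O$ for some open $W \subseteq X$ with $x \in W$. Density of $O$ in $X$ then forces $V\cdot x = W \cap O$ to be dense in $W$, which is the desired neighborhood of $x$.

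For the reverse direction, I would verify condition (2) of Lemma \ref{lem_rosendal}. Topological transitivity is immediate because $D$ is dense and contained in a single orbit, so that orbit is dense. Fix an open $V \ni 1_G$ and a nonempty open $U \subseteq X$, and choose by continuity of multiplication and inversion an open $V_0 \ni 1_G$ with $V_0 V_0^{-1} \subseteq V$. Using density of $D$, pick $x \in D \cap U$; the hypothesis furnishes an open neighborhood $N$ of $x$ in which $V_0 \cdot x$ is dense, and after replacing $N$ by $N \cap U$ we may assume $N \subseteq U$. Set $U' := N$. For any nonempty open $W_1, W_2 \subseteq U'$, density of $V_0 \cdot x$ in $U'$ produces $g_1, g_2 \in V_0$ with $g_i \cdot x \in W_i$; then $g_2 g_1^{-1} \in V_0 V_0^{-1} \subseteq V$ sends $g_1 \cdot x \in W_1$ to $g_2 \cdot x \in W_2$, so $V \cdot W_1 \cap W_2 \neq \emptyset$.

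As the author signals, this lemma is nearly a restatement of Rosendal's criterion, so there is no real obstacle. The only substantive outside input is Effros's theorem, invoked in the forward direction to identify $V \cdot x$ as relatively open in $O$; the reverse direction is pure bookkeeping, the main tactical choice being the halving neighborhood $V_0$ so that $g_2 g_1^{-1}$ lands in $V$.
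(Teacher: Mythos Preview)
Your proof is correct and follows essentially the same route as the paper: Effros' theorem for the forward direction, and the halving-neighborhood trick to verify Rosendal's condition (2) for the reverse direction. The only cosmetic difference is that the paper takes a symmetric $V'$ with $(V')^2 \subseteq V$ rather than your $V_0 V_0^{-1} \subseteq V$, and you spell out explicitly why $U'$ works whereas the paper leaves that check to the reader.
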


\begin{proof}
    The forward direction is immediate from Effros' theorem (\cite{effros}, Theorem 2.1). In particular, for every $y$ in the co-meager orbit and every open neighborhood of the identity $W$, $W \cdot y$ is co-meager (and so dense) in a neighborhood of $y$. 

    For the reverse direction, suppose that $X$ satisfies the condition given in the statement of the lemma for a dense set $D \subset X$. Note that $X$ must be topologically transitive, so we need only check condition (2) in Lemma \ref{lem_rosendal}. Let $V \ni 1_G$ and let $U \subseteq X$ be open and nonempty. By density, find $x \in D \cap U$. Let $V' \ni 1_G$ be open, symmetric such that $(V')^2 \subset V$. Then let $U'$ be such that $V' \cdot x$ is dense in $U'$. This $U'$ works to witness condition (2). 
\end{proof}

Now we move to the situation of interest. Let $\gamma_n: H_n \times X_n \to X_n$ be a sequence of group actions of Polish groups $H_n$ on Polish spaces $X_n$. For each $n$, let 
\[i_n^{n+1}:H_n \to H_{n+1}\] 
be a continuous group homomorphism and let 
\[f_n^{n+1}: X_n \to X_{n+1}\]
be a continuous injective map which is $i_n^{n+1}$-equivariant, in the sense:
\[f_n^{n+1}(g \cdot x) =i_n^{n+1}(g) \cdot f_n^{n+1}(x)\]
for all $g \in H_n$ and $x \in X_n$.
Then, the direct limit of the actions $\gamma_n$ is the natural action $\gamma$ of the group $\varinjlim (H_n,i_n^{n+1})$ on the space $\varinjlim (X_n,f_n^{n+1})$. We use 
\[i_n:H_n \to \varinjlim (H_n,i_n^{n+1})\] 
and 
\[f_n: X_n \to \varinjlim (X_n,f_n^{n+1})\]
to denote the inclusion maps from each coordinate into the direct limit. The limit object, $\gamma$, is in the category of group actions on sets. However, when $\varinjlim(H_n,i_n^{n+1})$ can be equipped with a group topology $\tau$ that makes every inclusion map $H_n \hookrightarrow \varinjlim (H_n,i_n^{n+1})$ continuous, we get a continuous action of topological group \\
$(\varinjlim(H_n,i_n^{n+1}), \tau)$ on topological space $\varinjlim (X_n,f_n^{n+1})$ where the latter space is equipped with the direct limit topology. It is in general a subtle point when direct limits of topological groups can be equipped with such topologies (see for instance \cite{tatsuma}) but for the setting we consider it will be immediate. Note that neither $\varinjlim (H_n,i_n^{n+1})$ nor $\varinjlim (X_n,f_n^{n+1})$ is necessarily Polish.

Let $G$ be a Polish group and $X$ a Polish $G$-space. We say that a direct limit $\gamma$ of a sequence of actions $\gamma_n:H_n \times X_n \to X_n$ of Polish groups on Polish spaces \emph{approximates the $G$-space $X$} if there is a group homomorphism 
\[i:\varinjlim(H_n,i_n^{n+1}) \to G\]
with dense image and with each $i \circ i_n$ continuous and a continuous map 
\[f:\varinjlim (X_n,f_n^{n+1}) \to X\]
which is equivariant in the sense that $f(h \cdot x)=  i(h) \cdot f(x)$. 

For a metric space $X$, point $x \in X$, and $\epsilon >0$, we use $B_X(x,\epsilon)$ to denote as usual the open ball around $x$ of radius $\epsilon$. Two pieces of notation: given a continuous map between metric spaces $f:(X,d_X) \to (Y,d_Y)$ and $x \in X$, let
\[\textrm{mod} (f,x,\epsilon)=\sup \left\{\delta \ : \ f[B_X(x,\delta)] \subseteq B_Y(f(x),\epsilon)\right\}\]
and let
\[\textrm{comod} (f,x, \epsilon)= \sup \{\delta \ : \ f[B_X(x,\epsilon)] \supseteq B_Y(f(x),\delta) \cap f[X] \}\]
So, for example, when $f$ is an isometry $\textrm{mod}(f,x,\epsilon)=\textrm{comod}(f,x,\epsilon)=\epsilon$. 

\begin{prop}\label{thm_dl_crit}
    Let $G$ be a Polish group and $X$ a Polish $G$-space. Suppose that $X$ is approximated by a direct limit $\gamma$ of Polish group actions $\gamma_n:H_n \times X_n \to X_n$. Fix metrics inducing the Polish topology on each space. Let $y \in X_0$. Assume that for each $n$, $H_n \cdot f_0^n(y)$ is dense in $X_n$. Let $J$ be a fixed natural number. If for any $\epsilon >0$, any $j \geq J$ and any $g \in H_j$ there exists a sequence $\{\alpha_n\}_{n\geq j}$ satisfying:
    \begin{enumerate}
        \item for any $n \geq j$, $B_{H_n} \left(1, \tmod (i \circ i_n, 1, \epsilon)\right) \cdot i_j^n(g) \cdot f_0^n(y)$ is dense in 
        \[B_{X_n} \left(i_j^n(g) \cdot f_0^n(y), \alpha_n\right)\]
        \item $\inf \{\tcomod (f \circ f_n, i_j^n(g) \cdot f_0^n(y), \alpha_n) \ :\ n \geq j \} >0$
    \end{enumerate}
    then $G \cdot (f\circ f_0(y))$ is comeager in $X$.
\end{prop}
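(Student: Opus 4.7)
My plan is to apply Lemma \ref{lem_dense_condition} directly, with base point $x_0 := f \circ f_0(y)$ and dense set
\[D := \{i \circ i_j(g) \cdot x_0 \ : \ j \geq J, \ g \in H_j\}.\]
By construction $D$ is contained in the single orbit $G \cdot x_0$. Using equivariance of $f$, I would rewrite each element as $f \circ f_j(g \cdot f_0^j(y))$; since $H_j \cdot f_0^j(y)$ is dense in $X_j$ by hypothesis and $f \circ f_j$ is continuous, $D$ is dense in $\bigcup_{j \geq J} f \circ f_j(X_j) = f(\varinjlim X_n)$, which I take to be dense in $X$ (implicit in the setup of ``approximates'').

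Next, I would fix $x = i \circ i_j(g) \cdot x_0 \in D$ and an open neighborhood $V$ of $1_G$, choose $\epsilon > 0$ with $B_G(1_G, \epsilon) \subseteq V$, and apply the hypothesis of the proposition to $g, j, \epsilon$ to obtain a sequence $\{\alpha_n\}_{n \geq j}$ satisfying (1) and (2). Setting $y_n := i_j^n(g) \cdot f_0^n(y)$, equivariance gives $f \circ f_n(y_n) = x$ for every $n \geq j$, and $\beta := \inf_{n \geq j} \tcomod(f \circ f_n, y_n, \alpha_n) > 0$ by (2).

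The core computation I expect to perform is the following chain. For $h \in B_{H_n}(1, \tmod(i \circ i_n, 1, \epsilon))$, the definition of $\tmod$ forces $i \circ i_n(h) \in B_G(1_G, \epsilon) \subseteq V$, so by equivariance $i \circ i_n(h) \cdot x = f \circ f_n(h \cdot y_n) \in V \cdot x$. Condition (1) together with continuity of $f \circ f_n$ shows these points are dense in $f \circ f_n(B_{X_n}(y_n, \alpha_n))$, while the definition of $\tcomod$ shows that this image contains $B_X(x, \beta') \cap f \circ f_n(X_n)$ for every $\beta' < \beta$. Taking the union over $n \geq j$ and using density of $f(\varinjlim X_n)$ in $X$, one concludes that $V \cdot x$ is dense in $B_X(x, \beta')$, a neighborhood of $x$. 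Lemma \ref{lem_dense_condition} then delivers the desired comeagerness of $G \cdot x_0$.

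The delicate point I expect to have to track carefully is the interplay between the two relevant topologies — the direct-limit topology on $\varinjlim X_n$ and the Polish topology on $X$ — encoded through the $\tmod$/$\tcomod$ quantities. Condition (1) is purely a density statement at the $(H_n, X_n)$-level and has nothing to say about $X$, while (2) is what uniformly transfers ball information out of each $X_n$ and into $X$; the uniformity in $n$ of the $\tcomod$ estimate is precisely what lets one pass from the level-by-level density statements to density in a genuine ball in $X$ after taking the union over $n \geq j$.
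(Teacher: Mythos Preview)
Your proposal is correct and follows essentially the same route as the paper's own proof: both apply Lemma~\ref{lem_dense_condition} to the dense set $D = i[H]\cdot f\circ f_0(y)$, pick $\epsilon$ with $B_G(1,\epsilon)\subseteq V$, invoke the hypothesis to get $\{\alpha_n\}$, use condition~(1) with the definition of $\tmod$ to land $i\circ i_n(h)\cdot x$ in $V\cdot x$, and use condition~(2) with the definition of $\tcomod$ to cover a fixed ball $B_X(x,\beta)$ by the images $f\circ f_n\big(B_{X_n}(y_n,\alpha_n)\big)$ intersected with $f\circ f_n(X_n)$. The only cosmetic differences are that the paper phrases the core step as ``approximate a given $z\in B_X(x,\delta)$'' rather than ``density in the image set,'' and it works directly with $\delta=\beta$ rather than shrinking to $\beta'<\beta$; both formulations are equivalent. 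Your remark that density of $f(\varinjlim X_n)$ in $X$ is implicit in the setup matches the paper, which also uses this without stating it in the definition of ``approximates.''
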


\begin{proof}
    Note first that $i[H] \cdot i\circ i_0(y)$ is dense in $X$. This is because $i[H]$ is dense in $G$, the orbit of $f_0(y)$ under the action of $H$ is dense in $\varinjlim (X_n,f_n^{n+1})$ (this is because $f_0^n(y)$ has dense $H_n$-orbit in $X_n$ for all $n)$, and the action of $G$ on $X$ is continuous. We will now check the condition of Lemma \ref{lem_dense_condition} for the dense set $D= i[H] \cdot f\circ f_0(y)$. It is obvious by definition that $D$ is contained in a single $G$-orbit. 

    Fix $V \ni 1_g$ open and fix $x \in D$. Let $g \in H_j$ such that $x= f \circ f_j (g \cdot f_0^j(y))$. Let $\epsilon >0$ such that $B_G(1,\epsilon) \subset V$. Let $\{\alpha_n\}_{n \in\N}$ be as in the Proposition for $\epsilon$ and $g, j$. Let 
    \[\delta = \inf \{\tcomod (f \circ f_n, i_j^n(g) \cdot f_0^n(y), \alpha_n) \ : \ n \geq j\}\]
    and we have by assumption that $\delta >0$. We claim $V \cdot x$ is dense in $B_X(x,\delta)$. Let $z \in B_X(x,\delta)$ and let $\rho >0$. We will find an element of $V \cdot x$ which is $\rho$-close to $z$. By density of $f[\varinjlim X_n]$ in $X$, we may assume that $z \in f[\varinjlim X_n]$. Let $n$ such that $z=f \circ f_n(z')$ for some $z'$ in $X_n$. We may assume that $n \geq j$. By condition (2) and the definition of $\tcomod$ and the definition of $\delta$, we have that $z' \in B_{X_n} (i_j^n(g) \cdot f_0^n(y), \alpha_n)$. By condition (1) we may find 
    \begin{equation}\label{eqnp1}
    h \in B_{H_n} \left(1, \mod (i \circ i_n,1,\epsilon)\right)
    \end{equation}
    such that 
    \begin{equation}\label{eqnp2}
    h \cdot i_j^n(g) \cdot f_0^n(y) \in B_{X_n}\left(z', \tmod (f \circ f_n, z',\rho)\right)
    \end{equation}
    By \eqref{eqnp1}, $i \circ i_n(h) \in V$. By \eqref{eqnp2} and equivariance, 
    \[i \circ i_n(h) \cdot x = f \circ f_n \left(h \cdot i_j^n(g) \cdot f_0^n(y)\right) \in B_{X}(z,\rho). \]
    
\end{proof}

\section{The universal Knaster continuum}\label{secbasic}
Here we collect various basic facts and notation about Knaster continua that we will need later. For us, $\N=\{0,1,2,\ldots \}$. A \emph{Knaster-type} continuum is a continuum which can be written as an inverse limit $\varprojlim (I_n,f_n)$ where each $I_n=[0,1]$ and each $f_n$ is a continuous open map with $f_n(0)=0$. The universal Knaster continuum is the unique-up-to-homeomorphism Knaster-type continuum which continuously and openly, surjects onto all other Knaster-type continua (see \cite{knasterumf} or \cite{eberhartlattice} for more). We will take the following concrete definition of the universal Knaster continuum. Choose $\{p_i\}_{i\in\N \setminus \{0\}}$ a sequence of prime numbers such each prime number appears infinitely many times. Then, set 
\[K=\varprojlim \left(I_n,T_{p_n}\right)\]
with $T_{p_n}:I_n \to I_{n-1}$. Throughout, $T_d$ for $d\geq 1$ is the \emph{standard degree-$d$ tent map} defined by
\[ T_d(x)= \begin{cases}
dx-m & \textrm{ if } x\in \left[\frac{m}{d},\frac{m+1}{d}\right] \textrm{ and $m$ is even}\\

1+m-dx & \textrm{ if } x\in \left[\frac{m}{d},\frac{m+1}{d}\right] \textrm{ and $m$ is odd}\\
\end{cases}\]
By Theorem 2.2 of \cite{eberhartlattice}, $K$ as defined above is the universal Knaster continuum. For each $n \in \N$, we denote by $\pi_n$ the $n$th coordinate projection map $K \to I_n$. We use the following metric on $K$ for convenience in later computations:
\[d_K\left((x_n)_{n\in\N},(y_n)_{n\in\N}\right) = \frac{\norm{x_0-y_0}}{2}+\sum_{i=1}^\infty \frac{1}{\Pi_{j=1}^i p_j} \norm{x_i-y_i}\]

An approximation theorem of D{\k e}bski \cite{debski} provides a useful notion of \emph{degree} for homeomorphisms of $K$. Given $f \in \Homeo (K)$, by D{\k e}bski's theorem there exists a sequence of continuous, open maps $f_n:I_n \to [0,1]$ such that 
\[\sup_{x \in K} \norm{f_n \circ \pi_n(x)-\pi_0 \circ f(x)} \to 0 \textrm{ as } n \to \infty\]
The \emph{degree} of $f$ is defined by the formula:

\begin{equation}\label{eqn_deg}
    \deg(f) := \lim_{n \to \infty} \frac{\deg(f_n)}{\prod_{i=1}^np_i}
\end{equation}
For a continuous open map $g:[0,1] \to [0,1]$, $\deg (g)$ is the minimal $k \in \N$ such that there exists 
\[0=i_0 <i_1<i_2<\cdots <i_k=1\]
such that $g\restriction_{[i_j,i_{}j+1}]$ is monotone for each $j=0,1,\cdots k-1$.
Note that the sequence $\frac{\deg(f_n)}{\prod_{i=1}^np_i}$ as in \eqref{eqn_deg} is eventually constant (see \cite{debski}). A consequence of D{\k e}bski's results, reformulated appropriately, is that the degree map is a continuous group homomorphism from $\Homeo(K)$ onto the group $\Q^\times$ of positive rationals with multiplication (see \cite{knasterumf} for more explanation). 

From now on, $\Homeo^1(K) =\ker (\textrm{deg})$ is the open normal subgroup of $\Homeo(K)$ consisting of all degree-one homeomorphisms of the universal Knaster continuum. We take this group equipped with the metric:
\[d_{\Hk}(f,g)=\sup_{x \in K} d_K(f(x),g(x))\]
We will use the group $\Ho$ of order-preserving homeomorphisms of the unit interval. The metric we will use on $\Ho$ is:

\[\dsup (f,g)=\sup_{x\in [0,1]} \norm{f(x)-g(x)}\]
With the metrics given above, $\Hk$ and $\Ho$ are both non-locally compact Polish groups. Throughout, we say that $g$ is a \emph{generic element} of a Polish group $G$ if the conjugacy class of $g$ is comeager in $G$.

We say that a homeomorphism $f \in \Homeo(K)$ is \emph{diagonal} if for every $n$ there is a continuous open map $f_n:I_{j_n} \to I_n$ where $j_n\geq n$ such that $\pi_n \circ f=f_n \circ \pi_{j_n}$. In the definition, we may assume that $j_{n-1}<j_n$ for each $n$. Diagonal maps are studied by Eberhart, Fugate, and Schumann in \cite{eberhart}; we will use one of their main results in Proposition \ref{prop0} below.

We first concretely investigate the diagonal maps a bit further. If $f$ is diagonal and of degree-one and $f_n:I_{j_n}\to I_n$ as in the definition of diagonal, notice that $\deg (f_n)=\deg (T_{p_{n+1}} \circ T_{p_{n+2}}\circ \cdots \circ T_{p_{j_n}})$. 

For $g \in \Ho$, let $\tilde{g}$ be the homeomorphism defined by $\tilde{g}(x)=1-g(1-x)$. Observe the following:

\begin{lem}\label{lem1}
The map $f \mapsto \tilde{f}$ is an isometry and a continuous group monomorphism 
\[\Ho \to \Ho\]   
\end{lem}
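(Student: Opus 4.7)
The statement is a direct verification, with no deep content beyond careful bookkeeping of the involution $x \mapsto 1-x$. The plan is to check the properties in the order: (i) $\tilde{f} \in \Ho$, (ii) homomorphism, (iii) injectivity, (iv) isometry, and then note that (iv) gives continuity for free.

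First I would observe that $\tilde{f}$ is an order-preserving homeomorphism of $[0,1]$: it is a composition of the reflection $r(x) = 1-x$ with $f$ and again with $r$, i.e.\ $\tilde{f} = r \circ f \circ r$. Since $r$ is an order-reversing involution of $[0,1]$ and $f$ is order-preserving, the composition $r \circ f \circ r$ preserves order; it sends $0$ to $1 - f(1) = 0$ and $1$ to $1 - f(0) = 1$, and is continuous. So $\tilde{f} \in \Ho$.

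For the homomorphism property, the identity $\widetilde{f \circ g} = \tilde{f} \circ \tilde{g}$ is immediate from the formula $\tilde{f} = r \circ f \circ r$ together with $r \circ r = \mathrm{id}$:
\[
\tilde{f} \circ \tilde{g} = (r \circ f \circ r)(r \circ g \circ r) = r \circ (f \circ g) \circ r = \widetilde{f \circ g}.
\]
Injectivity is equally direct: if $\tilde{f} = \tilde{g}$, then $r \circ f \circ r = r \circ g \circ r$, and composing with $r$ on both sides gives $f = g$.

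For the isometry property, I would substitute $y = 1-x$ in the supremum:
\[
\dsup(\tilde{f},\tilde{g}) = \sup_{x \in [0,1]} \bigl| (1 - f(1-x)) - (1 - g(1-x)) \bigr| = \sup_{y \in [0,1]} |f(y) - g(y)| = \dsup(f,g).
\]
Continuity of $f \mapsto \tilde{f}$ then follows from being an isometry. There is no real obstacle here; the only thing to be careful about is invoking the correct formulas for $\dsup$ and for the group operation (composition) consistently, which is automatic once one writes $\tilde{f} = r \circ f \circ r$.
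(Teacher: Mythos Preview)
Your proof is correct and complete. The paper does not give a proof of this lemma at all --- it is stated after the phrase ``Observe the following:'' and left as an immediate verification for the reader --- so your argument via the involution $r(x)=1-x$ and the identity $\tilde f = r\circ f\circ r$ is exactly the routine check the author has in mind.
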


For $f_0,f_1,\ldots, f_{n-1} \in \Ho$, define $\oplus (f_0,f_1,\ldots,f_{n-1}) \in \Ho$ by the following formula:
\[\oplus (f_0,f_1,\ldots, f_{n-1})(x) = \frac{1}{n} f_i(nx-i)+\frac{i}{n} \textrm{ for $x$ in } \left[\frac{i}{n}, \frac{i+1}{n}\right] \]
Given $g \in \Ho$ and $d \in \N$ we define
\[\oplus^d(g)=\oplus (g_0,g_1,\ldots,g_{n-1})\]
where $g_i=g$ when $i$ is even and $g_i=\tilde{g}$ when $i$ is odd.
By direct computation one checks the following: 

\begin{lem}\label{lem0}
    Suppose that $g \in \Homeo_+[0,1]$ and $d \in \N$. We have that
    \[g \circ T_d=T_d \circ \oplus^d(g)\]
\end{lem}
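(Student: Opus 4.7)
The plan is to verify the identity $g \circ T_d = T_d \circ \oplus^d(g)$ pointwise, exploiting the piecewise definitions of both $T_d$ and $\oplus^d(g)$ on the natural partition $[0,1] = \bigcup_{i=0}^{d-1} [i/d,(i+1)/d]$. Fix $x \in [i/d,(i+1)/d]$ and note first that, regardless of the parity of $i$, the affine scaling $y \mapsto y/d + i/d$ sends $[0,1]$ onto $[i/d,(i+1)/d]$; hence $\oplus^d(g)(x)$ lies in this same subinterval, which is exactly what is needed to apply the correct branch of $T_d$ to the right-hand side.

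Next I would split into two cases by the parity of $i$. If $i$ is even, then $T_d(x) = dx-i$, and $\oplus^d(g)(x) = \tfrac{1}{d}g(dx-i)+\tfrac{i}{d}$; applying $T_d$ (again with the even-branch formula $dx-i$) gives $g(dx-i)$, matching $g(T_d(x))$. If $i$ is odd, then $T_d(x) = 1+i-dx$, and $\oplus^d(g)(x)=\tfrac{1}{d}\tilde g(dx-i)+\tfrac{i}{d}$; applying the odd branch $T_d(y)=1+i-dy$ gives $1-\tilde g(dx-i)$.

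The only substantive observation — and the reason $\tilde g$ appears in the definition of $\oplus^d$ in the first place — is the identity $1 - \tilde g(y) = g(1-y)$, which is immediate from $\tilde g(y) = 1-g(1-y)$. Using it, the odd case simplifies to $g(1+i-dx) = g(T_d(x))$, completing the verification. There is no real obstacle beyond bookkeeping: the definitions were tailored so that on each subinterval, the orientation reversal introduced by an odd branch of $T_d$ is exactly undone by the conjugation $g \mapsto \tilde g$.
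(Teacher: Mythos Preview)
Your argument is correct and is precisely the ``direct computation'' the paper leaves to the reader: the key points are that $\oplus^d(g)$ preserves each subinterval $[i/d,(i+1)/d]$, and that on odd pieces the identity $1-\tilde g(y)=g(1-y)$ cancels the orientation reversal of the odd branch of $T_d$. There is nothing to add.
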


The next lemma allows us to ``straighten'' diagonal maps:

\begin{lem}\label{compltriangle}
    Let $f,g: [0,1] \to [0,1]$ be continuous, open maps, with $f(0)=g(0)=0$. There exists $h \in \Ho$ such that $f =g\circ h$.
\end{lem}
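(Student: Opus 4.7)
The plan is to exploit the piecewise-monotone structure of continuous open maps $[0,1] \to [0,1]$ and build $h$ lap by lap. For any continuous open map $g : [0,1]\to[0,1]$, at each critical point the map must attain $0$ or $1$, since otherwise a neighborhood of the critical point would map to a set having an interior local extremum, violating openness. Consequently there is a partition $0 = b_0 < b_1 < \cdots < b_k = 1$ with $g|_{[b_j, b_{j+1}]}$ a homeomorphism onto $[0,1]$; because $g(0) = 0$ the first lap is increasing, and then the monotonicity types alternate. The same applies to $f$, giving a partition $0 = a_0 < a_1 < \cdots < a_m = 1$ with the identical alternating pattern. Since $\deg(g \circ h) = \deg(g)$ for any $h \in \Ho$, the conclusion $f = g \circ h$ forces $m = k$, which I take as part of the implicit hypothesis (otherwise the statement is vacuous).

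Given matching partitions with $m = k$, I would define $h$ on each lap of $f$ by
\[
h|_{[a_j,a_{j+1}]} \;=\; \bigl(g|_{[b_j,b_{j+1}]}\bigr)^{-1} \circ f|_{[a_j,a_{j+1}]}.
\]
This is well-defined because $g|_{[b_j,b_{j+1}]}$ is a homeomorphism onto $[0,1]$, which contains the image of $f|_{[a_j,a_{j+1}]}$. Since $f$ and $g$ share the same alternating pattern of monotonicity lap by lap, on each $[a_j, a_{j+1}]$ the composition is an order-preserving homeomorphism onto $[b_j, b_{j+1}]$ sending $a_j \mapsto b_j$ and $a_{j+1} \mapsto b_{j+1}$. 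On the first lap both restrictions are increasing, so the composition is increasing; on the second both are decreasing, so the composition is again increasing; and so on inductively.

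Finally, the local pieces glue continuously at each interior breakpoint $a_j$: both adjacent definitions of $h$ at $a_j$ produce $b_j$, since $f(a_j) \in \{0,1\}$ equals $g(b_j)$ (both are $0$ for $j$ even and $1$ for $j$ odd), and $b_j$ is the unique preimage of this endpoint value in each of the two adjacent laps of $g$. Hence $h$ is a continuous, strictly increasing bijection of $[0,1]$ fixing the endpoints, i.e., $h \in \Ho$, and $g \circ h = f$ holds by construction on each lap, hence globally. The only delicate point is verifying the matching of monotonicity types lap by lap, and this is exactly what the hypothesis $f(0) = g(0) = 0$ (together with openness) guarantees.
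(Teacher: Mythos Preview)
Your proof is correct and follows essentially the same approach as the paper: decompose $[0,1]$ into the laps of $f$ and of $g$, observe that the $j$th laps of $f$ and $g$ have matching monotonicity (both increasing or both decreasing), and define $h$ piecewise as $(g|_{K_j})^{-1}\circ f|_{I_j}$. Your version is actually more thorough---you justify why each lap is a homeomorphism onto $[0,1]$, why the monotonicity types alternate starting from increasing, why the pieces glue at the breakpoints, and you explicitly flag the implicit hypothesis $\deg(f)=\deg(g)$ that the paper silently assumes when it writes ``Let $n=\deg(f)=\deg(g)$.''
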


\begin{proof}
    Let $n=\deg(f)=\deg(g)$. Let $I_1,I_2,\ldots,I_n$ be closed intervals with:
    \begin{enumerate}
        \item $\bigcup_{j=1}^nI_j =[0,1]$
        \item $0 \in I_1$
        \item $I_j$ and $I_{j+1}$ intersect in a singleton for each $j$ 
        \item $I_j \cap I_k =\emptyset$ when $\norm{j-k} \geq 2$
        \item $f\restriction_{I_j}$ is monotone for all $j$
    \end{enumerate}
    
    Let $K_1,k_2,\ldots, K_n$ be closed intervals witnessing analogous conditions for $g$. For each $j$, $f \restriction_{I_j}:I_j \to I$ and $g \restriction_{K_j}$ are homeomorphisms and further they are either both order-preserving or both order-reversing. Define $h:[0,1] \to [0,1]$ by the condition that $h \restriction_{I_j} = \left(g\restriction_{K_j}\right)^{-1} \circ f\restriction_{I_j}$ and then one may check that $h \in \Ho$ and that $f=g \circ h$. 
\end{proof}

Notice Lemma \ref{compltriangle} and Lemma \ref{lem0} imply that when $f \in \Homeo^1(K)$ is diagonal, there exists $N \in \N$ so that for all $n \geq N$, there exists $f_n \in \Ho$ such that $\pi_n \circ f= f_n \circ \pi_n$. In this situation, we say that $f$ is \emph{induced at coordinate $n$} by $f_n$. The next proposition summarizes how the diagonal maps sit within $\Hk$; the density part of Proposition \ref{prop0} is due to \cite{eberhart}.

\begin{prop}\label{prop0}
    The diagonal maps form a dense, meager subgroup in $\Homeo^1(K)$.
\end{prop}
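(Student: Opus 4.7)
The plan is to write the set of diagonal degree-one homeomorphisms as the ascending union $\bigcup_{n\in\N} D_n$ of closed subgroups of $\Hk$, where
\[D_n = \{f \in \Hk \ : \ f \text{ is induced at coordinate } n\},\]
and to show that each $D_n$ is nowhere dense.

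The preliminary steps are routine. By the remark immediately preceding the Proposition, every degree-one diagonal $f$ is induced at coordinate $n$ by some $f_n \in \Ho$ for all sufficiently large $n$, so the set of diagonal maps equals $\bigcup_n D_n$. Checking that each $D_n$ is a closed subgroup and that $D_n \subseteq D_{n+1}$ is a quick application of Lemma \ref{lem0}: if $f$ is induced at $n$ by $f_n$, then $f^{-1}$ is induced by $f_n^{-1}$, products behave correspondingly, and $f$ is induced at $n+1$ by $\oplus^{p_{n+1}}(f_n)$. Density of $\bigcup_n D_n$ in $\Hk$ is the part attributed to \cite{eberhart}.

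For meagerness, the standard topological-group dichotomy (a subgroup with nonempty interior is open) reduces the matter to showing $D_n$ is not open, equivalently, to producing a sequence in $\Hk \setminus D_n$ converging to $1_K$. I would lift from a deep level $m \gg n$: choose $g_m \in \Ho$ close to the identity in $\dsup$ but with $g_m(1/p_m) \neq 1/p_m$, and let $h \in \Hk$ be the unique degree-one homeomorphism of $K$ with $\pi_m \circ h = g_m \circ \pi_m$ obtained by extending $g_m$ diagonally via $\oplus^{p_{m+k}}$ at higher coordinates. Iterating Lemma \ref{lem0} shows $h \in D_n$ iff $g_m$ lies in the image of $\oplus^{p_m} \circ \cdots \circ \oplus^{p_{n+1}}: \Ho \to \Ho$; every element of this image fixes $1/p_m$, so by construction $h \notin D_n$.

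The main obstacle is the continuity estimate: one must quantify how close $h$ is to $1_K$ in terms of how close $g_m$ is to the identity in $\Ho$. Coordinate-by-coordinate, one gets $|\pi_i(h(x)) - \pi_i(x)| \leq (p_{i+1}\cdots p_m)\dsup(g_m,1)$ for $i \leq m$ from the $p$-Lipschitz property of the tent maps, and exponentially decaying bounds for $i > m$ from $\dsup(\oplus^p(g),1) \leq \dsup(g,1)/p$. Summing against the weights $1/\prod_{j=1}^i p_j$ appearing in $d_K$ yields a bound of the form $d_{\Hk}(h,1_K) \leq C \prod_{j=1}^m p_j \cdot \dsup(g_m,1)$ for an absolute constant $C$. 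Hence given any $\epsilon > 0$, one can find $g_m$ with $g_m(1/p_m) \neq 1/p_m$ and $\dsup(g_m,1) < \epsilon / (C\prod_{j=1}^m p_j)$—both conditions being easy to satisfy simultaneously since the first is open and clearly non-vacuous in any $\dsup$-ball around the identity—and the corresponding $h$ lies in $\Hk \setminus D_n$ within distance $\epsilon$ of $1_K$, completing the argument.
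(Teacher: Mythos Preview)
Your argument is correct and shares the paper's core idea: write the diagonal maps as $\bigcup_n D_n$ and show each $D_n$ is nowhere dense by perturbing at a deeper coordinate $m>n$ to a map that fails to fix a point which every element of $D_n$, read at level $m$, must fix. The paper perturbs near an arbitrary diagonal basepoint and uses the point $1/d$ with $d=p_{n+1}\cdots p_m$; you perturb near the identity and use $1/p_m$; both work. Your continuity estimate is exactly the content of Lemma~\ref{lem_mod_value}, which the paper also invokes (as a forward reference) at this step.

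The one structural difference is your reduction via the closed-subgroup dichotomy, which lets you check only near $1_K$ rather than near an arbitrary point; the paper instead argues directly that $D_n$ is not dense in any open set. Your shortcut is legitimate, but be aware that the assertion ``$D_n$ is closed'' is not a consequence of Lemma~\ref{lem0} as you suggest---it needs a separate (routine) argument: if $f^{(k)}\to f$ in $\Hk$ with each $f^{(k)}$ induced at $n$ by $f_n^{(k)}$, then the $f_n^{(k)}$ converge uniformly to some continuous non-decreasing $f_n$ with $\pi_n\circ f=f_n\circ\pi_n$, and one uses that $f$ is a homeomorphism (applying the same reasoning to $f^{-1}$) to see $f_n\in\Ho$. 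Without closedness, ``not open'' gives only empty interior, not nowhere-density, so this step is load-bearing for your version.
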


\begin{proof}
    It is immediate from the definition that the composition of two diagonal maps is diagonal. Say $f$ is diagonal and let $N$ be so that for all $n\geq N$ there exists $f_n \in \Ho$ with $\pi_n \circ f=f_n \circ \pi_n$. Then it is easy to check by Lemmas \ref{lem1} and \ref{lem0} that $f^{-1}$ is the map satisfying $\pi_n \circ f^{-1}=f_n^{-1}\circ \pi_n$ and $f^{-1}$ is diagonal.

    Density of the subgroup of diagonal maps is a consequence of Theorem 4.7 of \cite{eberhart}. It can also be seen as a consequence of \cite{knasterumf} Theorem 5.1 along with the observation that automorphisms of the pre-universal Knaster continuum induce diagonal homeomorphisms of $K$.

    Let $D_n=\{f \in \Hk \ : \ \exists f_n \in \Ho (\pi_n \circ f=f_n \circ \pi_n\}$ and note that the diagonal maps are exactly $\bigcup_{n\in\N}D_n$. We show that each $D_n$ is nowhere dense. Suppose $U \subset \Hk$ is open and non-empty; we show that $D_n$ is not dense in $U$. Let $g \in U$ be diagonal; say $g$ is induced at coordinate $m$ by $g_m$ where we may assume that $m>n$. Let $\epsilon >0$ such that $B_{\Hk}(g,\epsilon) \subset U$. Let $d=p_{n+1}p_{n+2}\cdots p_m$. Let $h' \in \Ho$ be such that $\dsup(h',g_m)<\frac{\epsilon}{\prod_{i=1}^m p_i}$ and so that $h'\left(\frac{1}{d}\right) \neq \frac{1}{d}$. Let $h \in \Hk$ be induced by $h'$ at coordinate $m$. By Lemma \ref{lem_mod_value}, $h \in U$. Let $\eta <\frac{\norm{h'\left(\frac{1}{d}\right)-\frac{1}{d}}}{2}$ and now we claim that $B_{\Hk}\left(h,\frac{\eta}{\prod_{i=1}^mp_i}\right) \cap D_n =\emptyset$. To see this, suppose $f \in D_n$. Let $f_n \in \Ho$ be such that $f$ is induced by $f_n$ at coordinate $n$. Then $f$ is induced by $\oplus^d(f_n)$ at coordinate $m$ by Lemma \ref{lem0}. Then, notice that since $\oplus^d(f_n) \left(\frac{1}{d}\right)=\frac{1}{d}$, we have that $\dsup \left(\oplus^d(f_n),h'\right) \geq \eta$. This implies that
    \[d_{\Hk} \left(f,h\right) \geq \frac{\eta}{\prod_{i=1}^mp_i} \]
    simply by considering some $x \in K$ where $\pi_k(x)$ is such that 
    \[\norm{\oplus^d(f_n)(\pi_k(x))-h'(\pi_k(x))} \geq \eta\]
    So, $D_n$ is not dense in $U$. 
\end{proof}

Notice the Baire category theorem implies that there are homeomorphisms of $K$ which are not diagonal; in fact, a similar argument shows there are homeomorphisms of $K$ of any degree which are not diagonal--this provides another proof of and a slight strengthening of a result of Eberhart, Fugate, and Schumann (\cite{eberhart}, p.138).

\section{The conjugacy action of $\Ho$}\label{sec:homeo_int}
In this section we establish the crucial lemma about the conjugacy action of $\Ho$. 

\subsection{The generic element of $\Ho$}
Let $f \in \Homeo_+[0,1]$. By $\fix (f)$, we denote the set of fixed points of $f$; it is a closed subset of $[0,1]$ which includes the points 0 and 1. The complement of $\fix(f)$ is a countable union of disjoint open intervals. This countable collection of intervals have an obvious order on them coming from the order on $[0,1]$. For each interval $I$ in this collection, continuity of $f$ implies that either we have that for all $x \in I$, $f(x)>x$ or we have that for all $x \in I$, $f(x)<x$. In the former case we call $I$ a \emph{positive interval} and in the latter, a \emph{negative interval}. So, we can associate to $f$ a countable linear order $L_f$ along with a marking map $m_f:L_f \to \{-1,1\}$ that encodes the order-type and positive versus negative orientation of the intervals forming $\fix(f)^c$. The following theorem appears in \cite{hjorth}:

\begin{thm}[\cite{hjorth}, Theorem 4.6]
    Two homeomorphisms $f$ and $g$ are conjugate in $\Homeo_+[0,1]$ if and only if there is an order-preserving bijection $\phi:L_f \to L_g$ such that $m_g(\phi(x))=m_f(x)$ for all $x \in L_f$. 
\end{thm}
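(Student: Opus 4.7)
\emph{Plan.} The theorem splits into two directions; the forward direction is a direct calculation, while the reverse direction requires an explicit construction.

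For the forward direction, suppose $g = hfh^{-1}$ with $h \in \Ho$. Because $h$ is an order-preserving homeomorphism, $f(x)=x$ is equivalent to $g(h(x))=h(x)$, so $h$ carries $\fix(f)$ bijectively and order-preservingly onto $\fix(g)$, and hence induces an order isomorphism $\phi$ between the systems of maximal open intervals of $\fix(f)^c$ and $\fix(g)^c$. Marking preservation is a one-line check: if $f(x)>x$ on an interval $I$ and $y=h(x)\in h(I)$, then $g(y)=h(f(x))>h(x)=y$ since $h$ is order-preserving; the negative case is symmetric.

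For the reverse direction, given $\phi:L_f \to L_g$ order-preserving and marking-preserving, I would build $h$ piecewise over the intervals. On a single $I=(a,b)$ in the interval system of $f$ with $\phi(I)=(c,d)$, both $f|_I$ and $g|_{\phi(I)}$ are fixed-point free with strictly monotone orbits that cluster at the endpoints. Choose base points $x_0 \in I$ and $y_0 \in (c,d)$, fix any order-preserving homeomorphism of fundamental domains $[x_0,f(x_0)] \to [y_0,g(y_0)]$, and then \emph{force} the conjugation relation $h\circ f = g\circ h$ to extend this to an order-preserving homeomorphism $h_I:[a,b] \to [c,d]$ with $h_I(a)=c$, $h_I(b)=d$. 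Marking agreement is exactly what ensures the extensions from the two sides of the fundamental domain have matching orientation.

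It remains to assemble the $h_I$ into a single $h \in \Ho$ and to verify $hfh^{-1}=g$. For $x \in \fix(f)$ not already assigned, define $h(x)$ using the order, for instance as the supremum of $h_I(b_I)$ over intervals $I=(a_I,b_I)$ with $b_I \leq x$. Order-preservation of $\phi$ guarantees that this supremum agrees with the corresponding infimum taken from the right, so the resulting $h$ is a well-defined order-preserving bijection $[0,1] \to [0,1]$; monotonicity plus density of the already-defined values gives continuity, hence $h \in \Ho$. By construction $h \circ f = g \circ h$ on each interval and trivially on $\fix(f)$, so $g = hfh^{-1}$. The main obstacle I expect is this gluing step: checking that the piecewise conjugations extend continuously across $\fix(f)$, especially at points where intervals of $\fix(f)^c$ accumulate from one or both sides. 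The hypothesis that $\phi$ is an order isomorphism of $L_f$ with $L_g$ is what rules out any mismatch in the cluster behavior of the two interval systems and makes the supremum-based extension well-defined.
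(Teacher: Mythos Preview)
The paper does not prove this statement; it is quoted from Hjorth without argument, so there is no proof in the paper to compare yours against.

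Your forward direction is correct. The reverse direction, however, has a genuine gap at exactly the gluing step you flagged, and in fact the statement as literally written is false. Take $f\in\Ho$ with $\fix(f)=\{0\}\cup[\tfrac13,\tfrac23]\cup\{1\}$ and $f(x)>x$ on $(0,\tfrac13)\cup(\tfrac23,1)$, and take $g\in\Ho$ with $\fix(g)=\{0,\tfrac12,1\}$ and $g(x)>x$ on $(0,\tfrac12)\cup(\tfrac12,1)$. Then $L_f$ and $L_g$ are each two-element linear orders with both elements marked $+1$, so there is an order- and marking-preserving bijection $\phi:L_f\to L_g$. But $f$ and $g$ are not conjugate: any conjugating $h\in\Ho$ would carry $\fix(f)$ bijectively onto $\fix(g)$, and the former is uncountable while the latter has three points.

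In your construction this shows up concretely. With the data above, your piecewise conjugations are homeomorphisms $h_{I_1}:[0,\tfrac13]\to[0,\tfrac12]$ and $h_{I_2}:[\tfrac23,1]\to[\tfrac12,1]$, so necessarily $h_{I_1}(\tfrac13)=\tfrac12=h_{I_2}(\tfrac23)$. Your supremum/infimum extension then assigns $h(x)=\tfrac12$ for every $x\in[\tfrac13,\tfrac23]$: the left-sup and right-inf do agree, as you assert, but the resulting $h$ is constant on a nondegenerate interval and hence not a bijection. The marked order $(L_f,m_f)$ simply does not see intervals of fixed points, so no argument from $\phi$ alone can rule this out.

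What makes the gluing work is an extra hypothesis: if $\fix(f)$ and $\fix(g)$ are totally disconnected, then $\fix(f)^c$ is dense in $[0,1]$, so between any two distinct points of $\fix(f)$ lies some $I\in L_f$, which forces your extended $h$ to be strictly increasing. Under that hypothesis your argument is correct. For the paper's only use of this theorem---identifying the generic element in Theorem~\ref{thm2}---this is harmless, since condition~(2) there is precisely total disconnectedness of $\fix(f)$.
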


So in particular one may concretely describe the generic element of $\Ho$:

\begin{thm}[\cite{kr}, Theorem 5.3;  \cite{glasnerweiss}, Section 9]\label{thm2}
Any $f \in \Homeo_+[0,1]$ such that:
\begin{enumerate}
    \item $L_f$ is a dense linear order without endpoints (i.e., order-isomorphic to $\Q$)
    \item $\fix (f)$ is totally disconnected
    \item $m_f^{-1}(1)$ and $m_f^{-1}(-1)$ are each dense, unbounded in both directions subsets of $L_f$ 
\end{enumerate}
has comeager conjugacy class in $\Homeo_+[0,1]$. 
\end{thm}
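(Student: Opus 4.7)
By Hjorth's classification (Theorem~4.6), two elements of $\Ho$ are conjugate if and only if their marked order invariants $(L_f, m_f)$ are isomorphic. A standard back-and-forth argument shows that up to isomorphism there is a unique countable marked dense linear order without endpoints in which both marking values are dense and unbounded, and any $f$ satisfying (1)--(3) realizes this marked order. Hence
\[A := \{g \in \Ho : g \text{ satisfies } (1), (2), (3)\}\]
is precisely the conjugacy class of $f$, and it suffices to show that $A$ is comeager in $\Ho$.

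The strategy is to exhibit a countable family of open dense subsets of $\Ho$ whose intersection is contained in $A$. For each pair of rationals $0 \le a < b \le 1$ and each finite sign pattern $\sigma = (\sigma_1, \ldots, \sigma_n) \in \{-1, +1\}^n$, let
\[U_{a, b, \sigma} = \left\{g \in \Ho : \exists\, a < x_1 < \cdots < x_n < b \text{ such that } \sigma_i\left(g(x_i) - x_i\right) > 0 \text{ for each } i \right\}.\]
Each $U_{a, b, \sigma}$ is a union over rational tuples of basic open conditions on $g$, hence open. Each is also dense in $\Ho$: given $g$ and $\epsilon > 0$, pick a short sub-interval $[a', b'] \subset (a, b)$ on which $g$ varies by less than $\epsilon/2$, then perturb $g$ on $[a', b']$ by a small bump construction arranged to realize $\sigma$ at chosen interior rationals, producing an element of $U_{a, b, \sigma}$ within $\dsup$-distance $\epsilon$ of $g$.

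By the Baire category theorem, $B := \bigcap_{a, b, \sigma} U_{a, b, \sigma}$ is a comeager $G_\delta$, and it remains to verify $B \subseteq A$. The single-sign patterns force both $\{g > \mathrm{id}\}$ and $\{g < \mathrm{id}\}$ to be dense in $[0,1]$, so $\fix(g)$ is nowhere dense and (2) holds. Now take distinct intervals $I_1 < I_2$ in $L_g$. If $\sup I_1 < \inf I_2$, then single-sign patterns applied inside a rational sub-interval of $(\sup I_1, \inf I_2)$ produce both a positive and a negative $L_g$-interval strictly between $I_1$ and $I_2$. Adjacency $\sup I_1 = \inf I_2 = p$ is ruled out: on any small rational interval around $p$, every non-fixed point lies in $I_1 \cup I_2$ with each $I_j$ of constant sign, so when $I_1$ and $I_2$ have equal sign no length-one pattern of opposite sign can be realized, and when their signs differ no length-three alternating pattern $(+1, -1, +1)$ or $(-1, +1, -1)$ can be realized--contradicting $g \in B$ in either case. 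The same argument applied near $0$ and $1$ shows $L_g$ has no minimum and no maximum. Together these observations yield (1) and (3).

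\textbf{Main obstacle.} The reduction via Hjorth's theorem and the uniqueness of the target marked order are standard. The substantive technical step is the density of each $U_{a, b, \sigma}$: one must construct an $\epsilon$-small order-preserving perturbation of $g$ that realizes an arbitrary finite sign pattern at prescribed rational points. This is a careful but routine bump construction--choose the interior rationals first, then design piecewise-linear modifications of small amplitude and disjoint support around each--where one must take care to preserve monotonicity throughout.
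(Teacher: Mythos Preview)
Your density claim for the sets $U_{a,b,\sigma}$ is false, and in fact $B=\bigcap_{a,b,\sigma}U_{a,b,\sigma}$ is empty. For the first point, take $g(x)=\sqrt{x}$ and the rational interval $(a,b)=(\tfrac14,\tfrac12)$. On this interval $g(x)-x\ge 0.2$, so every $h$ with $\dsup(h,g)<0.1$ still satisfies $h(x)>x$ throughout $(\tfrac14,\tfrac12)$; hence $U_{1/4,\,1/2,\,(-1)}$ misses the ball $B_{\Ho}(g,0.1)$ entirely. Your bump construction cannot manufacture a sign change on a subinterval where $g$ sits uniformly far from the diagonal, and the hypothesis ``$g$ varies by less than $\epsilon/2$ on $[a',b']$'' controls $g(b')-g(a')$, not $g(x)-x$.

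For the second point, if $g\in B$ then every rational subinterval of $[0,1]$ contains both a point where $g>\mathrm{id}$ and a point where $g<\mathrm{id}$, hence (intermediate value theorem) a fixed point of $g$. So $\fix(g)$ is dense and closed, forcing $g=\mathrm{id}$; but the identity realizes no sign pattern at all. Thus $B=\emptyset$ and the inclusion $B\subseteq A$ is vacuous. Note also that any $f$ satisfying (1)--(3) has a nondegenerate interval $I\in L_f$, and for rational $a<b$ with $[a,b]\subset I$ the opposite single-sign pattern fails on $(a,b)$; so in fact $A\cap B=\emptyset$.

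The paper does not supply a proof of this theorem; it is quoted from \cite{kr} and \cite{glasnerweiss}. Your overall plan---reduce via Hjorth's classification and back-and-forth uniqueness of the target marked order, then show $A$ is comeager---is the right shape, but a correct $G_\delta$ description of $A$ must use open conditions that remain compatible with $g$ having long stretches of a single sign. One has to encode ``between any two components of $\fix(g)^c$ there is a component of each sign,'' not ``every rational interval sees both signs''; the former is what (1)--(3) actually assert, and the latter is strictly stronger and satisfied by nothing.
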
 

Our $\oplus$ operation preserves the conditions of Theorem \ref{thm2}:

\begin{lem}\label{lem2}
    If $f \in \Ho$ satisfies (1)-(3) in Theorem \ref{thm2}, then so does $\oplus^d(f)$ for any $d \in \N$.  
\end{lem}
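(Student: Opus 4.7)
My plan is to analyze $\oplus^d(f)$ separately on each of the $d$ subintervals $J_i = [i/d,(i+1)/d]$ and then combine the pieces. The preliminary step is to observe that $\tilde f$ satisfies (1)--(3) whenever $f$ does: since $\tilde f(x) - x = -(f(1-x) - (1-x))$, the set $\fix(\tilde f)$ is the reflection of $\fix(f)$ through $1/2$, so $L_{\tilde f}$ is the reverse of $L_f$ and $m_{\tilde f}$ is $-m_f$ composed with that reversal. All three conditions are symmetric under reversing the order and swapping the two signs, so they pass to $\tilde f$.

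Next I would check that each $J_i$ is invariant under $\oplus^d(f)$ and that, writing $\phi_i(x) = dx-i$ for the order-preserving affine bijection $J_i \to [0,1]$, the definition of $\oplus^d$ yields
\[
\oplus^d(f)\restriction_{J_i} = \phi_i^{-1} \circ f_i \circ \phi_i,
\]
where $f_i = f$ for $i$ even and $f_i = \tilde f$ for $i$ odd. Since $\phi_i$ is affine and order-preserving, this conjugation identifies $\fix(\oplus^d(f)) \cap J_i$ with an affine image of $\fix(f_i)$ and preserves the sign of each maximal non-fixed interval (because $\phi_i^{-1}(g(y)) - \phi_i^{-1}(y) = (g(y)-y)/d$). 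Hence the labelled linear order $(L_{\oplus^d(f)}, m_{\oplus^d(f)})$ is the ordered sum $(L_{f_0},m_{f_0}) + \cdots + (L_{f_{d-1}},m_{f_{d-1}})$.

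From this decomposition the three conditions fall out one by one. Condition (2) is immediate: $\fix(\oplus^d(f))$ is a finite union of totally disconnected sets together with the finitely many partition points $i/d$. Condition (1) needs the remark that a finite ordered sum of copies of $\Q$ is a countable dense linear order without endpoints: density within each summand is given, across summands it follows because every element of $L_{f_i}$ has elements of $L_{f_{i+1}}$ strictly above it (and similarly below), there is no minimum because $L_{f_0}$ has none, and symmetrically no maximum because $L_{f_{d-1}}$ has none. Cantor's back-and-forth theorem then identifies the sum with $\Q$. Condition (3) reduces directly to density and unboundedness of $m_{f_i}^{-1}(\pm 1)$ in each $L_{f_i}$, which holds by the preliminary step on $\tilde f$.

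I do not foresee any genuine obstacle; the proof is essentially a bookkeeping exercise. The one place requiring a little care is the sign convention when $f_i = \tilde f$, but this is absorbed into the preliminary step, after which every summand contributes in the same way regardless of parity.
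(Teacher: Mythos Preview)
Your proposal is correct and follows essentially the same approach as the paper's proof: first verify that $\tilde f$ inherits conditions (1)--(3) via the reflection $x\mapsto 1-x$, then decompose $\oplus^d(f)$ into $d$ affine copies of $f$ and $\tilde f$ and read off the three conditions from the resulting ordered sum of labelled orders. Your write-up is slightly more explicit (the affine conjugations $\phi_i$ and the appeal to Cantor's theorem), but the argument is the same.
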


\begin{proof}
Let $f$ satisfy (1)-(3). It is readily checked that $\fix(\tilde{f}) =\{1-x \ : \ x \in \fix (f) \}$. So, $\fix (\tilde{f})$ is totally disconnected and the intervals forming $L_{\tilde{f}}$ form a dense linear order without endpoints. We have also that $f(x)<x$ if and only if $\tilde{f}(1-x)>1-x$ and $f(x)>x$ if and only if $\tilde{f}(1-x)<1-x$. This implies that $\tilde{f}$ satisfies condition (3) above.

The structure $L_{\oplus^d(f)}$ is exactly a copy of $L_f$ followed by a copy of $L_{\tilde{f}}$ followed by a copy of $L_f$ and so on, with $d$ copies in total. So in particular it satisfies (1) and (3) above. To see (2) note that $\fix\left(\oplus^d(f)\right)$ is exactly
\[\bigcup_{i=0}^{d-1} \left\{\frac{x+i}{n} \ : \ x \in \fix (f_i)\right\}\]
where $f_i=f$ for $i$ even and $f_i=\tilde{f}$ for $i$ odd. 
\end{proof}

Notice that Lemma \ref{lem2} immediately implies that $\Homeo^1(K)$ has a dense conjugacy class.

\begin{prop}\label{propdense}
Let $f \in \Hk$ and $f_0 \in \Ho$ such that $\pi_0 \circ f= f_0 \circ \pi_0$ and with $f_0$ satisfying (1)-(3) above. Then, the conjugacy class of $f$ is dense in $\Homeo^1(K)$. Actually, if $\mathcal{C}$ is the conjugacy class of $f$, then the intersection of $\mathcal{C}$ with the set of diagonal maps is dense in $\Homeo^1(K)$.
\end{prop}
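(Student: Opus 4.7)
The plan is to reduce to a density problem in $\Ho$ via the chain of copies of $\Ho$ sitting inside $\Hk$. By Proposition~\ref{prop0} the diagonal maps are dense in $\Hk$, so it suffices to approximate any diagonal target $g$ by a diagonal conjugate of $f$. Fix such $g$ and $\epsilon>0$; enlarging the inducing coordinate via Lemma~\ref{lem0}, I may assume $g$ is induced at coordinate $N$ by some $g_N\in\Ho$, with $N$ so large that $\sum_{i>N}\tfrac{1}{\prod_{j=1}^i p_j}<\epsilon/4$.

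Set $D=\prod_{j=1}^N p_j$. By Lemma~\ref{lem2}, $\oplus^D(f_0)$ still satisfies conditions (1)-(3), so by Theorem~\ref{thm2} its conjugacy class is comeager (hence dense) in $\Ho$. Pick $\psi\in\Ho$ with
\[
\dsup\bigl(\psi\circ\oplus^D(f_0)\circ\psi^{-1},\,g_N\bigr)<\delta,
\]
for a $\delta>0$ to be fixed later. Build $\Phi\in\Hk$ as the unique diagonal map induced at coordinate $N$ by $\psi$: namely $\pi_m\circ\Phi=\oplus^{\prod_{i=N+1}^m p_i}(\psi)\circ\pi_m$ for $m\ge N$, and $\pi_m\circ\Phi=T_{p_{m+1}}\circ\cdots\circ T_{p_N}\circ\psi\circ\pi_N$ for $m<N$; these are consistent with the bonding maps by Lemma~\ref{lem0}.

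The heart of the proof is showing that $\Phi f\Phi^{-1}$ is again a diagonal map induced at coordinate $N$, with inducing data $\psi\circ\oplus^D(f_0)\circ\psi^{-1}$. At coordinate $N$ one has $\pi_N\circ\Phi f\Phi^{-1}=\psi\circ(\pi_N\circ f)\circ\psi^{-1}\circ\pi_N$, and the claim reduces to the identification $\pi_N\circ f=\oplus^D(f_0)\circ\pi_N$; the other coordinates then follow from Lemma~\ref{lem0}. Given this identification, $\dsup$-proximity of the inducing data translates to $d_{\Hk}$-proximity of $\Phi f\Phi^{-1}$ and $g$ via the Lipschitz constants $\tfrac{1}{\prod_{i=N+1}^m p_i}$ of $\oplus^{\prod_{i=N+1}^m p_i}$ (for $m\ge N$) and $\tfrac{D}{\prod_{j=1}^m p_j}$ of $T_{p_{m+1}}\circ\cdots\circ T_{p_N}$ (for $m<N$); weighting by the $d_K$-coefficients and combining with the tail bound gives $d_{\Hk}(\Phi f\Phi^{-1},g)<\epsilon$ for sufficiently small $\delta$.

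The main obstacle is the identification $\pi_N\circ f=\oplus^D(f_0)\circ\pi_N$. The hypothesis $\pi_0\circ f=f_0\circ\pi_0$ only forces $\pi_N\circ f$ to be a $T_D$-lift of $f_0\circ T_D\circ\pi_N$, and a priori there are many such lifts. Pinning down $\pi_N\circ f$ as the specific diagonal lift $\oplus^D(f_0)\circ\pi_N$ is where one must exploit the degree-one condition together with the rigidity supplied by the generic structure of $f_0$---for instance via a lift-uniqueness argument on the fibers of $\pi_0$, or by first showing that any $f$ with the hypothesis is conjugate in $\Hk$ to the canonical diagonal extension of $f_0$.
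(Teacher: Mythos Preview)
Your approach matches the paper's: reduce to a diagonal target $g$ induced at some large coordinate $N$ (the paper calls it $m$), use Lemma~\ref{lem2} and Theorem~\ref{thm2} to conclude that the map inducing $f$ at level $N$ is generic in $\Ho$, conjugate there, and lift the conjugator to a diagonal element of $\Hk$. The $\dsup$-to-$d_{\Hk}$ estimate is carried out in the paper via the single inequality $\bigl(\sum_{i=1}^m \tfrac{\prod_{j=i}^m p_j}{\prod_{j=1}^i p_j}\bigr)\epsilon<\tfrac{\eta}{2}$ together with a tail bound, which is exactly the Lipschitz bookkeeping you describe.

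On your ``main obstacle'': the paper does not address it. It simply writes ``$f_m$, the map that induces $f$ at coordinate $m$'' and proceeds, tacitly treating $f$ as the diagonal homeomorphism induced at level $0$ by $f_0$. Under that reading, Lemma~\ref{lem0} gives $\pi_N\circ f=\oplus^{D}(f_0)\circ\pi_N$ immediately and your argument is complete. Your caution about the literal hypothesis $\pi_0\circ f=f_0\circ\pi_0$ is legitimate---that condition alone does not force $f$ to be diagonal---but the paper's own proof supplies no lift-uniqueness or conjugacy-reduction argument of the sort you sketch; it just assumes the diagonal case. If you adopt that intended reading (which is also consistent with the sentence preceding the proposition and with how such $f$ are produced elsewhere in the paper), there is nothing further to prove.
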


\begin{proof}
Fix $y \in \Homeo^1(K)$ and fix $\eta >0$. By Proposition \ref{prop0}, we may assume that $y \in \mathcal{I}$. Say, $y$ is induced by $y_m \in \Ho$ at coordinate $m$. We may also assume that $m$ is large enough so that
\[\sum_{i=m+1}^\infty \frac{1}{\prod_{j=1}^ip_j} <\frac{\eta}{2}\]
By Lemma \ref{lem2} and Theorem \ref{thm2}, we have that $f_m$, the map that induces $f$ at coordinate $m$ has dense conjugacy class in $\Ho$. Let $\epsilon>0$ be such that
\[\left(\sum_{i=1}^m \frac{\prod_{j=i}^m p_j}{\prod_{j=1}^i p_j}\right)\epsilon<\frac{\eta}{2}\]
and find $g_m \in \Ho$ such that $\dsup(g_m^{-1}f_mg_m, y_m)<\epsilon$. Then, if $g$ is induced at coordinate $m$ by $g_m$, it follows that $d_G(g^{-1}fg,y)<\eta$. 
\end{proof}

\subsection{Controlling generic elements}
A consequence of Effros' Theorem \cite{effros} is that if $g$ is a generic element in a Polish group $G$, then for any neighborhood $V$ of $1_G$, we have that $\{h^{-1}gh \ : \ h \in V\}$ is dense in (actually, comeager in) some neighborhood of $g$. Apriori we have no control of \emph{how large} a neighboorhood of $g$ the set above is dense in. The main technical lemma below shows that we can exert some control with respect to operation $\oplus$:

\begin{lem}\label{mainlem}
    Let $B_{\Ho}(1,\epsilon)$ for some $\epsilon >0$ be an open ball centered at the identity in $\Homeo_+[0,1]$. Let $f \in \Homeo_+[0,1]$ be such that $f$ has comeager conjugacy class in $\Homeo_+[0,1]$. Suppose that $\delta >0$ is so that 
    \begin{enumerate}
        \item  $\{g^{-1}fg \ : \ g \in B_{\Ho}(1,\epsilon)\}$ is dense in $B_{\Ho}(f,\delta)$
        \item any $f' \in B_{\Ho}(f,\delta)$ has a fixed point less than $\epsilon$ 
        \item any $f' \in B_{\Ho}(f, \delta)$ has a  fixed point greater than $1-\epsilon$
    \end{enumerate}
    
  \noindent  Then for any $d \in \N$: 
    \[\left\{g^{-1}(\oplus^d(f))g \ : \ g \in B_{\Ho}\left(1,\frac{\epsilon}{d}\right)\right\}\]
    is dense in $B_{\Ho}\left(\oplus^d(f), \frac{\delta}{d}\right)$.
\end{lem}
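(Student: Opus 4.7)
The plan is to build the conjugator $g$ piece by piece, exploiting the self-similar structure of $\oplus^d(f)$: on each subinterval $[i/d,(i+1)/d]$ it is an affinely rescaled copy of $f$ or $\tilde f$, and the operation $\oplus^d$ scales sup-distances by $1/d$. Given an arbitrary $F\in B_\Ho(\oplus^d(f),\delta/d)$ and tolerance $\rho>0$, I will first locate fixed points of $F$ near each grid point $i/d$, split $F$ at those fixed points into $d$ pieces that (after rescaling by $d$) lie near $f$ or $\tilde f$, apply hypothesis (1) to each piece separately, and assemble the resulting conjugators into a single $g\in B_\Ho(1,\epsilon/d)$.

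The first step -- and the main obstacle -- is to locate fixed points $0=x_0<x_1<\dots<x_d=1$ of $F$ with $|x_i-i/d|<\epsilon/d$ for each $i$. For an interior $i$, $\oplus^d(f)$ restricted to $[(i-1)/d,i/d]$ is an affinely-rescaled copy of $f_{i-1}\in\{f,\tilde f\}$ with both endpoints fixed. Since $F$ is within $\delta/d$ of $\oplus^d(f)$ there, a mild affine correction of $F$ at the endpoints yields a homeomorphism $\hat F$ of $[(i-1)/d,i/d]$ whose rescaling to $[0,1]$ sits inside $B_\Ho(f_{i-1},\delta)$; hypothesis (3), applied to $f_{i-1}$ (which transfers from $f$ to $\tilde f$ when $i-1$ is odd via the isometry of Lemma \ref{lem1}), then produces a fixed point of $\hat F$ within $\epsilon/d$ of $i/d$, from which I extract a nearby fixed point of $F$ itself. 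Hypothesis (2) plays the symmetric role on the right-hand subinterval.

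With the $x_i$ in hand, $F$ restricts to a homeomorphism of each $[x_i,x_{i+1}]$. Let $\psi_i\colon[0,1]\to[x_i,x_{i+1}]$ and $\phi_i\colon[0,1]\to[i/d,(i+1)/d]$ be the order-preserving affine bijections, and set $F_i:=\psi_i^{-1}\circ F\restriction_{[x_i,x_{i+1}]}\circ\psi_i\in\Ho$. A direct estimate, using the $\delta/d$-closeness of $F$ to $\oplus^d(f)$ together with the fact that $\psi_i$ is close to $\phi_i$ in sup-norm, gives $\dsup(F_i,f_i)<\delta$. By hypothesis (1), applied to $f$ (or to $\tilde f$, for which (1) also holds because the tilde is a group homomorphism and an isometry fixing the identity), I obtain $\bar g_i\in B_\Ho(1,\epsilon)$ with $\dsup(\bar g_i^{-1}f_i\bar g_i,F_i)$ as small as desired.

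Finally, define $g$ on $[0,1]$ by $g(x_i)=i/d$ and $g\restriction_{[x_i,x_{i+1}]}=\phi_i\circ\bar g_i\circ\psi_i^{-1}$. A short computation bounds $\|g-\mathrm{id}\|_\infty$ by $\max_i\|\bar g_i-\mathrm{id}\|_\infty/d+\max_i|x_i-i/d|$, which I arrange to be below $\epsilon/d$ by tightening the precision in the fixed-point step. On each $[x_i,x_{i+1}]$ the conjugate $g^{-1}\oplus^d(f)g$ collapses via the affine identifications to $\psi_i\circ(\bar g_i^{-1}f_i\bar g_i)\circ\psi_i^{-1}$, which by construction is within $\rho$ of $F\restriction_{[x_i,x_{i+1}]}$, completing the density claim.
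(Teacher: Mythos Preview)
Your approach has a genuine gap at the step claiming $\dsup(F_i,f_i)<\delta$. The affine maps $\psi_i$ and $\phi_i$ differ in sup-norm by $\max(|x_i-i/d|,|x_{i+1}-(i+1)/d|)$, and hypotheses (2)--(3) only guarantee this quantity is below $\epsilon/d$; they give no bound in terms of $\delta$. After composing with $F$ and rescaling by a factor $\approx d$, the discrepancy between $\psi_i$ and $\phi_i$ produces an error of order $\epsilon$ multiplied by a modulus-of-continuity factor for $f$. Since $\epsilon$ is a fixed parameter of the lemma (possibly much larger than $\delta$) and $f$ is not Lipschitz, there is no reason this error is below $\delta$, so hypothesis (1) cannot be invoked on $F_i$. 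The same issue resurfaces in your final estimate $\|g-\mathrm{id}\|_\infty\le \max_i\|\bar g_i-\mathrm{id}\|_\infty/d+\max_i|x_i-i/d|$: each summand is bounded only by $\epsilon/d$, so the sum is merely below $2\epsilon/d$. You cannot ``tighten the precision in the fixed-point step'' to fix this, because hypotheses (2)--(3) give no control on the location of fixed points of $F$ finer than $\epsilon/d$.

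The paper's proof avoids rescaling on the off-grid intervals $[x_i,x_{i+1}]$ altogether. It first handles the special case in which $h$ already fixes every $i/d$; there your direct-sum idea works cleanly because $\psi_i=\phi_i$. For a general $h$, the paper approximates $h$ by such an $h'$, obtains a direct-sum conjugator $g$ for $h'$, and then performs local surgery on $g$ in a small neighborhood of each grid point to correct for the difference between $h$ and $h'$. This surgery step uses more than hypothesis (1): it relies on the explicit structure of the generic element from Theorem~\ref{thm2} (in particular the density of both positive and negative intervals in $L_f$) to find, arbitrarily close to each grid point, an interval of $\oplus^d(f)$ of the required sign onto which the corresponding bump of $h$ can be routed while keeping the modified conjugator within $\epsilon/d$ of the identity.
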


\begin{proof}
    We do the proof in detail for $d=2$ and then indicate how the argument generalizes. Fix $f, \epsilon, \delta$ satisfying the assumptions of the lemma. We want to show that 
    \[\left\{g^{-1}(f\oplus \tilde{f})g \ : \ g \in B\left(1,\frac{\epsilon}{2}\right)\right\}\]
    is dense in $B\left(f\oplus \tilde{f}, \frac{\delta}{2}\right)$. So let $h \in B\left(f\oplus \tilde{f},\frac{\delta}{2}\right)$ and let $\eta >0$. We find $g \in B \left(1,\frac{\epsilon}{2}\right)$ so that 
    \[d(g^{-1} (f \oplus \tilde{f}) g, h) <\eta\]

    \textbf{Case 1:}
    First, suppose that $h\left(\frac{1}{2}\right)=\frac{1}{2}$. Then, consider $h_1:[0,1] \to [0,1]$ given by:
    \[h_1(x):= 2 h\left(\frac{x}{2}\right)\]
    and $h_2:[0,1] \to [0,1]$ given by:
    \[h_2(x)=2h\left(\frac{x+1}{2}\right)-1\]
    Since $d(f \oplus \tilde{f}, h) <\frac{\delta}{2}$, one can check that $d(f,h_1)<\delta$ and $d(\tilde{f},h_2)<\delta$. So, by our assumption on $f$ we may find $g_1 \in B\left (1,\epsilon\right)$ such that 
    \[d(g_1^{-1}fg_1,h_1)<\eta.\]
    As $d(f, \tilde{h_2})<\delta$ (by Lemma \ref{lem1}), we find $g_2 \in B(1,\epsilon)$ such that $d(g_2^{-1}fg_2,\tilde{h_2})<\eta$ and thus
    \[d(\tilde{g_2}^{-1}\tilde{f}\tilde{g_2},h_2)<\eta\]
    Now, one checks that $g_1 \oplus \tilde{g_2} \in B\left(1,\frac{\epsilon}{2}\right)$ and that setting $g=g_1 \oplus \tilde{g_2}$ we have that
    \[d(g^{-1} (f \oplus \tilde{f}) g, h)<\eta\]

\textbf{Case 2:}
    Now we assume that $h \left(\frac{1}{2}\right) \neq \frac{1}{2}$. Let $a<\frac{1}{2}<b$ such that $h(a)=a$, $h(b)=b$ and for all $x \in (a,b)$, $h(x) \neq x$. We will assume that $h(x)>x$ for $x \in (a,b)$ (the case where $h(x)<x$ for $x \in (a,b)$ is analogous). 

    By conditions (2) and (3) on $\delta$, notice that we have that $\frac{1}{2}-a <\frac{\epsilon}{2}$ and $b-\frac{1}{2}<\frac{\epsilon}{2}$. Let $h' \in \Homeo_+[0,1]$ be such that $h'=h$ on $[0,a] \cup [b,1]$, $h'\left(\frac{1}{2}\right)=\frac{1}{2}$, $x<h'(x)\leq h(x)$ on $(a,b)\setminus \left\{\frac{1}{2}\right\}$ and $h' \in B\left(f\oplus \tilde{f},\frac{\delta}{2}\right)$. Applying Case 1, we may find $g=g_1 \oplus g_2 \in B\left(1,\frac{\epsilon}{2}\right)$ such that $d(g^{-1}(f \oplus \tilde{f})g,h') <\eta$ and by approximating $h'$ sufficiently well we may assume $d(g^{-1}(f \oplus \tilde{f})g,f \oplus \tilde{f})<\frac{\delta}{2}$. Note that $h'$ has a fixed point at $a$ and a fixed point at $b$; by approximating $h'$ sufficiently well using case 1, we may also assume that $g^{-1}(f \oplus \tilde{f})g$ has fixed points $a'$ and $b'$ with:
    \begin{enumerate}
        \item $\norm{a'-\frac{1}{2}}<\frac{\epsilon}{2}$
        \item $\norm{b'-\frac{1}{2}}<\frac{\epsilon}{2}$
        \item $\norm{a'-a}<\eta$ and $\norm{b'-b}<\eta$
        \item $\norm{h'(x)-x}<\eta$ and $\norm{h(x)-x}<\eta$ on the interval between $a'$ and $a$
        \item $\norm{h'(x)-x}<\eta$ and $\norm{h(x)-x}<\eta$ on the interval between $b'$ and $b$  
         \end{enumerate}
    
 Set $h''=g^{-1}(f \oplus \tilde{f})g$. Consider $g(a')$ and $g(b')$. These are fixed points of $f \oplus \tilde{f}$. Further, $g(a')<\frac{1}{2}$ and $g(b')>\frac{1}{2}$ because $g$ is a direct sum of two maps. 

    We will define a map $g':[0,1] \to [0,1]$. First, let $g'=g$ on $[0,a'] \cup [b',1]$. We need to define $g'$ as a map $[a',b'] \to [g(a'),g(b')]$. Consider $f \oplus \tilde{f} \restriction [g(a'),g(b')]$. Find a positive interval $(i,j)$ of $f \oplus \tilde{f}$ close enough to $\frac{1}{2}$ so that $j-a'<\frac{\epsilon}{2}$ and $b'-i<\frac{\epsilon}{2}$ (this is possible by condition (3) in Theorem \ref{thm2} applied to both $f$ and $\tilde{f}$). Let $k$ be a homeomorphism from $[a',b'] \to [a',b']$ such that
    \begin{enumerate}
        \item if $a<a'$ and $b'<b$, then $k = h$ on $[a'',b'']$ for some $a''>a'$ and $b''<b'$ such that 
        \begin{enumerate}[(a)]
            \item $\norm{h(x)-x}<\eta$ on $(a,a'')$ and $(b'',b)$
            \item $\norm{a''-a}, \norm{b''-b}<\eta$
            \item $\norm{g(a')-a''}<\frac{\epsilon}{2}$ and $\norm{g(b')-b''}<\frac{\epsilon}{2}$
        \end{enumerate}
         
        \item if $a'<a$ and $b<b'$, then $k=h$ on $[a,b]$
        \item if $a<a'$ and $b<b'$, then $k=h$ on $[a'',b]$ where $a''$ is chosen as in case (1)
        \item if $a'<a$ and $b'<b$, then $k=h$ on $[a,b'']$ where $b''$ is chosen as case (1)
    \end{enumerate}
    \emph{and}, $L_k$ is isomorphic to $L_{f \oplus \tilde{f} \restriction[g(a'),g(b')]}$ via an isomorphism that sends interval $J$ to $(i,j)$ where $J \in L_k$ is the interval containing $(a'',b'')$ in case (1) above, $J$ is the interval containing $(a,b)$ in case (2) above, $J$ is the interval containing $(a'',b)$ in case (3) and $J$ is the interval containing $(a,b'')$ in case (4). Now, we let $g'$ be such that $(g')^{-1}(f \oplus \tilde{f})\restriction_{[g(a'),g(b')]} g'=k\restriction_{[a',b']}$ and $g'$ sends $J$ to $(i,j)$. One checks that $d((g')^{-1}(f\oplus \tilde{f})g',h)<\eta$ and by our choice of interval $(i,j)$, for any $x \in [a',b']$, $\norm{g'(x)-x}<\frac{\epsilon}{2}$. 

    When $d>2$, the proof follows analogously. We first address the case that $h$ is such that $h \left(\frac{i}{d}\right)=\frac{i}{d}$ for all $0 \leq i\leq d$. Then, in the general case, just as in case 2 above we first replace $h$ by an $h'$ so that $h' \left(\frac{i}{d}\right)=\frac{i}{d}$ for each $0\leq i\leq d$ and $h'$ is very close to $h$ except possibly in small neighborhoods (radius less than $\frac{\epsilon}{d}$) around $\frac{i}{d}$. Then we let $g$ be such $g\oplus^d(f)g^{-1}$ is within $\eta$ of $h'$. We modify $g$ on the small intervals about each $\frac{i}{d}$ independently, just as done in the $d=2$ case, to produce $g'$ and then we check that indeed $g'$ conjugates $\oplus^d(f)$ to be close to $h$ and that $g' \in B_{\Ho}\left(1,\frac{\epsilon}{d}\right)$.
\end{proof}

\section{The main theorem}\label{sec:main_thm}
\subsection{Some set-up}
Recall that
\[K=\varinjlim (I_n,T_{p_n})\]
is the universal Knaster continuum, with each $I_n=[0,1]$, each $T_{p_n}:I_n \to I_{n-1}$ the standard degree-$p_n$ tent map, and $\{p_n\}_{n\in\N \setminus \{0\}}$ a sequence of prime numbers. Throughout this section we use the metric $d_{\Hk}$ on $\Hk$ defined in Section \ref{secbasic} and the usual supremum metric $d_{\sup}$ on $\Ho$. 

Let $H$ be the group $H=\varinjlim (H_n,i_n^{n+1})$ where $H_n=\Ho$ for each $n \in \N$ and where $i_n^{n+1}:H_n \to H_{n+1}$ is given by $i_n^{n+1}(f)=\oplus^{p_{n+1}}(f)$. We define $i:H \to \Hk$ as follows. For $g \in H$, say $g=i_j(g_j)$ for some $g_j \in H_j$, we define $i(g)$ to be the diagonal map satisfying 
\[\pi_j \circ i(g)= g_j \circ \pi_j\]
Lemma \ref{lem0} implies that $i$ is well-defined. 

Note that, by Proposition \ref{prop0}, the direct limit of the actions of $H_n$ on itself by conjugacy approximates the action of $\Hk$ on itself by conjugacy. 

\subsection{Two lemmas}
The first lemma computes $\tmod$-values for $i \circ i_n$.

\begin{lem}\label{lem_mod_value}
    Let $H_n$, $i_n$ and $i$ be as above. For any $g \in H_n$ and any $\epsilon >0$:
    \[\tmod ( i\circ i_n, g, \epsilon) \geq \frac{\epsilon}{\prod_{i=1}^np_i}\]
\end{lem}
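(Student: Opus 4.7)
The plan is to verify the bound directly by comparing $i \circ i_n(g)$ and $i \circ i_n(h)$ coordinate-by-coordinate in $K$. Writing $P_k := \prod_{j=1}^{k} p_j$ (with $P_0 = 1$), I want to show that if $d_{\sup}(g, h) < \epsilon/P_n$ then $d_{\Hk}(i\circ i_n(g), i\circ i_n(h)) < \epsilon$. Set $F := i \circ i_n(g)$ and $F' := i \circ i_n(h)$; it suffices to bound $d_K(F(x), F'(x))$ for each fixed $x \in K$ and then exploit the explicit formula for $d_K$ as a weighted sum over coordinate projections.

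I would split into two cases according to whether $m \geq n$ or $m < n$. For $m \geq n$, iterating Lemma~\ref{lem0} (together with uniqueness of the $\Ho$-lift of $g$ through a tent map) identifies $F$ as being induced at coordinate $m$ by $\oplus^{P_m/P_n}(g)$; this is essentially the identification used to see that $i:H\to \Hk$ is well-defined. From the explicit formula for $\oplus^d$ and the fact that $d_{\sup}(\tilde g, \tilde h) = d_{\sup}(g, h)$ (Lemma~\ref{lem1}), $\oplus^d$ contracts the sup metric by a factor of $1/d$, so
\[|\pi_m(F(x)) - \pi_m(F'(x))| \leq d_{\sup}\bigl(\oplus^{P_m/P_n}(g),\, \oplus^{P_m/P_n}(h)\bigr) \leq \frac{P_n}{P_m}\, d_{\sup}(g, h).\]
For $m < n$, I use $\pi_m = T_{p_{m+1}} \circ \cdots \circ T_{p_n} \circ \pi_n$ and $\pi_n \circ F = g \circ \pi_n$; since each $T_{p_j}$ is $p_j$-Lipschitz on $[0,1]$, the composition $T_{p_{m+1}} \circ \cdots \circ T_{p_n}$ has Lipschitz constant $P_n/P_m$, and the same inequality $|\pi_m(F(x)) - \pi_m(F'(x))| \leq (P_n/P_m)\, d_{\sup}(g, h)$ follows.

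Plugging these uniform coordinate-wise bounds into the formula for $d_K$ yields
\[d_K(F(x), F'(x)) \leq P_n \left(\frac{1}{2} + \sum_{m=1}^\infty \frac{1}{P_m^2}\right) d_{\sup}(g, h).\]
Because $p_j \geq 2$ forces $P_m \geq 2^m$, the tail sum is at most $\sum_{m \geq 1} 4^{-m} = 1/3$, so the parenthesized factor is bounded by $5/6$. Taking the supremum over $x \in K$ gives $d_{\Hk}(F, F') \leq \tfrac{5}{6} P_n\, d_{\sup}(g, h) < \epsilon$, which proves the claimed bound on $\tmod$. The only mildly subtle ingredient is the coordinate-$m$ identification for $m > n$, which is already implicit in the well-definedness of $i$; once that is in hand, the rest of the argument is a telescoping geometric estimate.
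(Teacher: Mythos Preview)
Your argument is correct and follows essentially the same route as the paper: the same case split at $m=n$, the same contraction property of $\oplus^d$ for $m\geq n$, the same Lipschitz estimate on compositions of tent maps for $m<n$, and the same final geometric-series bound $\tfrac12+\sum_{m\geq 1}P_m^{-2}\leq \tfrac12+\tfrac13<1$. The only cosmetic difference is that you carry $d_{\sup}(g,h)$ through to the end and substitute $d_{\sup}(g,h)<\epsilon/P_n$ last, whereas the paper substitutes at the start.
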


\begin{proof}
    Fix $\epsilon >0$. Let $h \in B_{\Ho} \left(g, \frac{\epsilon}{\prod_{i=1}^n p_i} \right)$. We need to show: 
    
    \[i \circ i_n(h) \in B_{\Hk} (i \circ i_n(g), \epsilon).\]
    
    Observe first that for any $g_1,g_2 \in \Ho$:
    \[d_{\sup}(g_1,g_2) <\epsilon \implies d_{\sup} \left(\oplus^d(g_1),\oplus^d(g_2)\right) <\frac{\epsilon}{d}\]

    By Lemma \ref{lem0}, for $N >n$
    \begin{equation} \label{eqn_mod_value_1}
    d_{sup} \left(i_n^N (g), i_n^N(h) \right) <\frac{\epsilon}{\prod_{i=1}^N p_i}
    \end{equation}

    Let $x = (x_n)_{n \in \N} \in K$. To ease notation, let $y = i \circ i_n(g)(x)$ and $z= i \circ i_n(h)(x)$. By \eqref{eqn_mod_value_1}, we have that for $m \geq n$:
    \[\norm{y_m-z_m} < \frac{\epsilon}{\prod_{i=1}^m p_i}\]
    For $m <n$ we have:
    \begin{align*}
        \norm{ y_m - z_m} &= \norm{T_{p_{m+1}p_{m+2}\cdots p_n}(y_n)- T_{p_{m+1}p_{m+2}\cdots p_n} (z_n)}\\
        & < p_{m+1}p_{m+2} \cdots p_n \frac{\epsilon}{p_1\cdots p_n}\\
        &= \frac{\epsilon}{p_1p_2\cdots p_m}
    \end{align*}

    So we have:
    \[d_K(y,z) \leq \frac{\epsilon}{2} +\sum_{m=1}^\infty \frac{1}{p_1p\cdots p_m}\frac{\epsilon}{p_1\cdots p_m} \leq \epsilon \left(\frac{1}{2} +\sum_{m=1}^\infty \frac{1}{2^{2m}}\right) <\epsilon\]

    and therefore
    \[d_{\Hk} (i \circ i_n(g),i \circ i_n(h)) <\epsilon \] 
\end{proof}

The next lemma is a key fact about about tent maps needed for one of the bounds in the main result.

\begin{lem}\label{lem3}
Let $\delta <\frac{1}{4}$. If $f,g \in \Homeo_+[0,1]$ are such that $\dsup(f,g) \geq \frac{\delta}{d}$, then there exists $x \in [0,1]$ such that at least one of the following two conditions hold:
\begin{enumerate}
    \item $\norm{T_d \circ f(x)-T_d \circ g(x)} \geq \delta$
    \item $\norm{T_d \circ f(x)-T_d \circ g(x)} \geq \frac{\delta}{2}$ and at least one of $T_d \circ f(x)$ or $T_d \circ g(x) \in \{0,1\}$
\end{enumerate}

\end{lem}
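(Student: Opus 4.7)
The plan is to pick a point $x^*\in [0,1]$ that realizes $M := d_{\sup}(f,g)\geq \delta/d$ (such $x^*$ exists by compactness), assume without loss of generality that $f(x^*)>g(x^*)$, and set $a := g(x^*)$, $b:=f(x^*)$, so $M = b-a$. The proof then proceeds by a case analysis on where the interval $[a,b]$ sits relative to the kink set $\{k/d : 0\leq k\leq d\}$ of $T_d$. Recall that $T_d$ is affine with slope $\pm d$ on each piece $[k/d,(k+1)/d]$, with $T_d(k/d)\in\{0,1\}$.

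If the open interval $(a,b)$ contains no kink, then $T_d$ is monotone affine on $[a,b]$, so $|T_d(f(x^*)) - T_d(g(x^*))| = d(b-a) = dM \geq \delta$, and $x^*$ itself witnesses condition (1). Otherwise let $k/d\in (a,b)$ be a kink and set $u := k/d - a$, $v := b - k/d$; these are positive with $u+v = M$, so $\max(du,dv)\geq \delta/2$. By swapping the roles of $f$ and $g$ we may assume $v\geq u$, hence $dv\geq \delta/2$. The natural candidate witness is $y := g^{-1}(k/d)$, which satisfies $y > x^*$, $T_d(g(y)) = T_d(k/d)\in\{0,1\}$, and $f(y)>b>k/d$. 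If moreover $f(y)\leq (k+1)/d$, then $f(y)$ lies in the adjacent affine piece of $T_d$, so
\[
|T_d(f(y)) - T_d(g(y))| = d(f(y)-k/d) \geq dv \geq \delta/2,
\]
and condition (2) holds at $y$.

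The main obstacle is the overshoot case $f(y) > (k+1)/d$. Here one passes instead to $y'' := f^{-1}((k+1)/d)\in (x^*,y)$, at which $T_d(f(y'')) = T_d((k+1)/d)\in\{0,1\}$ (the opposite parity to $T_d(k/d)$). By choosing $k/d$ to be the largest kink in $(a,b)$, we force $a\geq (k-1)/d$, so $g(y'')\in (a,k/d)\subseteq ((k-1)/d,k/d)$, and a direct computation on the affine piece $[(k-1)/d,k/d]$ yields $|T_d(f(y'')) - T_d(g(y''))| \geq 1 - du$. This is where the assumption $\delta<1/4$ is essential: in the small-$M$ regime $M<1/d$ (where $(a,b)$ has at most one kink), the inequality $v\geq u$ gives $u\leq M/2 < 1/(2d) < 1/d - \delta/(2d)$, so $1-du\geq \delta/2$, yielding condition (2) at $y''$. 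The large-$M$ regime $M\geq 1/d$, where $(a,b)$ may contain several kinks, is handled by choosing $k/d$ to be the largest kink in $(a,b)$ with $v=b-k/d\geq \delta/(2d)$; if no such kink exists, then $b$ lies within $\delta/(2d)$ of the nearest kink and a separate computation at $x^*$ (using $T_d(b)$ close to a value in $\{0,1\}$ together with $M\geq 1/d$) directly produces condition (1) or (2). The delicate point throughout is the verification of the slack $1-du\geq \delta/2$ in the overshoot analysis, which is exactly what the bound $\delta<1/4$ provides.
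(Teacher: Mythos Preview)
Your plan diverges from the paper's argument. The paper does not split into small-$M$/large-$M$ regimes; instead, starting from any $x$ with $|f(x)-g(x)|\ge \delta/d$ and a kink $j/d$ separating $f(x),g(x)$, it passes to $x'$ with $f(x')=j/d$ and either finishes or finds that $g(x')$ lies near a strictly larger kink $k_1/d$ of the same parity. Iterating produces a strictly monotone sequence $k_1<k_2<\cdots$ of kink indices, which must terminate at $k_n=d$; the hypothesis $\delta<\tfrac14$ is invoked only at this terminal step, to convert a $\delta/2$-gap near the endpoint into the desired conclusion.

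Your argument is essentially complete in the regime $M<1/d$, but note that overshoot cannot occur there: from $y=g^{-1}(k/d)$ one has $f(y)-k/d=f(y)-g(y)\le M<1/d$, so $f(y)<(k+1)/d$ automatically and the $y$-step already yields condition~(2). Your $y''$-analysis is therefore superfluous in that regime. (Incidentally, to force $a\ge (k-1)/d$ you want $k/d$ to be the \emph{smallest} kink in $(a,b)$, not the largest; in the single-kink case these coincide, so the slip is harmless there.)

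The genuine gap is the regime $M\ge 1/d$. Your proposed choice of $k$ (the largest kink with $b-k/d\ge\delta/(2d)$) does not rule out overshoot at $y=g^{-1}(k/d)$: one only knows $f(y)\le k/d+M$, and $M$ may be much larger than $1/d$. Once overshoot occurs, your $y''$-computation requires $g(y'')\in((k-1)/d,k/d)$, hence $a\ge(k-1)/d$; but for the \emph{largest} such $k$ there can be several kinks of $(a,b)$ below $k/d$, so $a$ may lie well below $(k-1)/d$ and the bound $|T_d f(y'')-T_d g(y'')|\ge 1-du$ does not follow (indeed $g(y'')$ need not sit in a single affine piece). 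The fallback ``separate computation at $x^*$'' is not spelled out, and I do not see a one-step argument that closes this case. What overshoot actually hands you is a \emph{new} point (your $y$) at which $f$ and $g$ again differ by more than $1/d$ and are again separated by a kink; iterating that observation is precisely the paper's ladder argument, and some such iteration (or an equivalent device) appears to be unavoidable here.
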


\begin{proof}
Consider $[0,1]$ as divided into sub-intervals $I_j :=\left[\frac{j}{d},\frac{j+1}{d}\right]$ for $j=0,1,\ldots, d-1$. 

    Since $\dsup(f,g) \geq \frac{\delta}{d}$ and $[0,1]$ is compact, there exists $x \in [0,1]$ such that $\norm{f(x)-g(x)} \geq \frac{\delta}{d}$. Notice that if $f(x)$ and $g(x)$ are in the same sub-interval $I_j$, then $\norm{T_d \circ f(x)-T_d \circ g(x)}=d\norm{f(x)-g(x)} \geq \delta$ and we are done as we have satisfied point (1). 

    So we may assume that $f(x)$ and $g(x)$ are in different sub-intervals.

    Without loss of generality suppose $f(x) <g(x)$. As $f(x)$ and $g(x)$ are in different sub-intervals, there exists some $j$ such that:
    \[\frac{j}{d}-f(x) \geq \frac{\delta}{2d} \textrm{ or } g(x)-\frac{j}{d} \geq \frac{\delta}{2d}\]
    
    \textbf{Case 1:} Suppose that $g(x)-\frac{j}{d} \geq \frac{\delta}{2d}$. Then, let $x' \in [0,1]$ be such that $f(x') =\frac{j}{d}$. Note that $g(x')>g(x)$ and so $g(x')$ is at distance greater than $\frac{\delta}{2d}$ from $f(x')$. If $g(x') \in I_j$, then we are done. In fact, notice that $\norm{T_d \circ f(x')-T_d \circ g(x')} <\frac{\delta}{2}$ if and only if 
     \[g(x') \in \left(\frac{k_1}{d}-\frac{\delta}{2d},\frac{k_1}{d}-\frac{\delta}{2d}\right)\]
     for some $k_1>j$ such that $k_1 = j \pmod 2$. If this happens then set $k_1$ as above. Now, let $x''$ be such that $f(x'')=\frac{k_1-1}{d}$. As before either it is the case that $\norm{T_d \circ f(x'')-T_d \circ g(x'')} \geq \frac{\delta}{2}$ or there exists $k_2$ such that $k_2 =k_1-1\pmod 2$, $k_2 \geq k_1$ and
     \[g(x'') \in \left(\frac{k_2}{d}-\frac{\delta}{2d},\frac{k_2}{d}-\frac{\delta}{2d}\right)\]
     Notice that the fact that $k_2 \geq k_1$ and $k_2 =k_1-1 \pmod 2$ actually implies that $k_2 >k_1$. Continuing in this way, we define a sequence $k_1<k_2<\cdots$ and either at some stage we find an $x$ witnessing point (2) of the lemma or we continue until we have reached some $k_n=d$. That is, there is some $y$ such that $f(y)=\frac{k_{n-1}-1}{d}$ and 
     \[g(y) \in \left(1-\frac{\delta}{2d}, 1\right)\]
     and $k_n =k_{n-1}-1 \pmod 2$. In particular, $k_{n-1}-1 < k_n-1$. Now, let $y'$ be such that $f(y')=\frac{k_n-1}{d}$ and notice that $g(y')>g(y)$ and so
     \[g(y')\in \left(1-\frac{\delta}{2d}, 1\right) \]
     If $T_d\left(\frac{d-1}{d}\right)=1$, then we have:
     \[T_d\circ f(y')=T_d \left(\frac{d-1}{d}\right)=1\]
     and since $T_d (1)=0$: 
     \[T_d\circ g(y') <\frac{\delta}{2}\]
     Since $1-\frac{\delta}{2}>1-\frac{1}{8}>\delta$, (we are using the assumption here that $\delta <\frac{1}{4}$), $y'$ witnesses the lemma. A totally analogous argument works if $T_d \left(\frac{d-1}{d}\right)=0$.

     \textbf{Case 2:} Suppose that $\frac{j}{d}-f(x) \geq \frac{\delta}{2d}$. This case proceeds in an analogous way as Case 1 with the roles of $f$ and $g$ switched; the sequence $k_1,k_2,\ldots $ is strictly decreasing instead of increasing but the arguments are exactly the same. 

     Notice that the $x$ chosen by this procedure to witness the fact that 
     \[\norm{T_d\circ f(x)-T_d\circ g(x)}\geq \frac{\delta}{2}\]
     has the property required by point (2).
\end{proof}

\subsection{Proof of main theorem}
Now we prove the main theorem:

\begin{thm}\label{thm_main}
    The group $\Hk$ has a comeager conjugacy class.
\end{thm}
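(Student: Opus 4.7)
The plan is to apply Proposition \ref{thm_dl_crit} to the direct limit $H=\varinjlim(H_n,i_n^{n+1})$ acting on itself by conjugation (each $X_n=H_n=\Ho$), which approximates the conjugation action of $\Hk$ on itself via the dense homomorphism $i:H\to\Hk$ from Proposition \ref{prop0}. I would first pick $y\in H_0=\Ho$ satisfying the three conditions of Theorem \ref{thm2}; iterating Lemma \ref{lem2} shows $i_0^n(y)$ then satisfies them in $H_n$, hence has comeager conjugacy class, so $H_n\cdot f_0^n(y)$ is automatically dense in $X_n$ for every $n$.

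Now fix $\epsilon>0$, $j\in\N$, and $g\in H_j$. A direct $\oplus$-calculation (using $\widetilde{gyg^{-1}}=\tilde g\tilde y\tilde g^{-1}$, so that $\oplus^d$ distributes through conjugation) yields
\[i_j^n(g)\cdot i_0^n(y)\cdot i_j^n(g)^{-1}=\oplus^{d}(F_j), \qquad d:=p_{j+1}p_{j+2}\cdots p_n,\]
where $F_j:=g\cdot i_0^j(y)\cdot g^{-1}\in\Ho$ is again generic, being conjugate to the generic $i_0^j(y)$. In particular $\fix(F_j)$ is totally disconnected with both positive and negative intervals accumulating at $0$ and at $1$. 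I would choose $\epsilon_j\leq\epsilon/\prod_{k=1}^jp_k$ and then, using Effros' theorem together with this fixed-point density, pick $\delta_j\in(0,1/4)$ small enough that the hypotheses of Lemma \ref{mainlem} hold for $F_j,\epsilon_j,\delta_j$. With $\alpha_n:=\delta_j/d$, Lemma \ref{mainlem} shows that conjugates of $\oplus^d(F_j)$ by $B_{\Ho}(1,\epsilon_j/d)$ are dense in $B_{\Ho}(\oplus^d(F_j),\alpha_n)$; by Lemma \ref{lem_mod_value}, $\epsilon_j/d\leq\epsilon/\prod_{k=1}^np_k\leq\tmod(i\circ i_n,1,\epsilon)$, so condition (1) of Proposition \ref{thm_dl_crit} is verified.

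The verification of condition (2) rests on the classical identity $T_a\circ T_b=T_{ab}$ for standard tent maps. If $h\in\Ho$ satisfies $\dsup(h,\oplus^d(F_j))\geq\alpha_n=\delta_j/d$, Lemma \ref{lem3} applied with this $d$ and $\delta_j$ produces $y_n\in[0,1]$ with
\[\bigl|T_d(h(y_n))-T_d(\oplus^d(F_j)(y_n))\bigr|\geq\delta_j/2.\]
Since $T_d=T_{p_{j+1}}\circ\cdots\circ T_{p_n}$, this quantity equals exactly the $j$-th coordinate gap between $(i\circ i_n)(h)(\tilde y)$ and $(i\circ i_n)(\oplus^d(F_j))(\tilde y)$ for any $\tilde y\in K$ with $\pi_n(\tilde y)=y_n$. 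The formula for $d_K$ then gives
\[d_{\Hk}\bigl((i\circ i_n)(h),\,(i\circ i_n)(\oplus^d(F_j))\bigr)\geq\frac{\delta_j}{2\prod_{k=1}^jp_k},\]
a positive constant independent of $n$, so $\tcomod(i\circ i_n,\oplus^d(F_j),\alpha_n)$ is bounded below uniformly. Proposition \ref{thm_dl_crit} then concludes that the $\Hk$-conjugacy class of $i(y)$ is comeager.

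The main obstacle is coordinating three different scales: Lemma \ref{lem_mod_value} contracts by $\prod_{k=1}^np_k$, Lemma \ref{mainlem} gains back a factor of $d=p_{j+1}\cdots p_n$, and Lemma \ref{lem3} must still deliver a $\tcomod$-floor at the remaining scale $\delta_j/d$. The tent-map identity $T_a\circ T_b=T_{ab}$ is the crucial fact that lets a single application of Lemma \ref{lem3} at the compound degree $d$ capture the $j$-th coordinate contribution all at once, which is what makes the three scales line up with no loss as $n\to\infty$.
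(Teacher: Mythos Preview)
Your proof is correct and follows the paper's approach exactly: the same direct-limit setup, the same use of Lemmas \ref{lem_mod_value} and \ref{mainlem} for condition (1) of Proposition \ref{thm_dl_crit}, and the same appeal to Lemma \ref{lem3} for condition (2). Your verification of (2) is in fact slightly simpler than the paper's---you take the uniform $\delta_j/2$ bound that Lemma \ref{lem3} guarantees in either of its cases and read it off at coordinate $j$, whereas the paper splits into the two cases of that lemma, imposes the extra hypothesis $\delta<1/p_j$, and in the second case passes to coordinate $j-1$ (this is why the paper fixes $J=2$) in order to recover $\delta$ rather than $\delta/2$; since any positive lower bound independent of $n$ suffices for condition (2), your shortcut works and renders the case split, the assumption $\delta<1/p_j$, and the restriction $J=2$ unnecessary.
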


\begin{proof}
    We will check the conditions of Proposition \ref{thm_dl_crit}. We set $J$ as in the statement of the Proposition to be 2. The $H_n$'s and $i_n^{n+1}, i_n, i$ from the statement of the proposition are as defined above. For each $n$, $X_n=H_n$ and $f_n^{n+1}=i_n^{n+1}$. The action of $H_n$ on itself is the conjugacy action. Let $y= i \circ i_0 (\phi)$ where $\phi \in \Ho$ has comeager conjugacy class in $\Ho$. By Lemma \ref{lem2}, $f_0^n(\phi)$ has comgeager conjugacy class in $H_n$ for all $n \in \N$. Fix $\epsilon >0$ and $g \in H_j$ for some $j$. Since $g \cdot \phi$ has comeager conjugacy class in $\Ho$, we know that 

    \[B_{\Ho} \left(1, \frac{\epsilon}{\prod_{i=1}^jp_i}\right) \cdot g \cdot \phi \textrm{ is dense in } B_{\Ho}(g \cdot \phi, \delta)\]
for some $\delta >0$. By making $\delta$ smaller, we may assume that any $h \in B_{\Ho} \left(g \cdot \phi, \delta\right)$ has a fixed point less than $\epsilon$ and a fixed point greater than $1-\epsilon$. We also assume (by making $\delta$ smaller if necessary) that $\delta < \min \{\frac{1}{4}, \frac{1}{p_j}\}$.
    
    Let $\alpha_j = \delta$ and $\alpha_n = \frac{\delta}{p_{j+1} \cdots p_n}$ for each $n >j$. 

    By choice of $\delta$, Lemma \ref{mainlem} immediately implies condition (1) of Proposition \ref{thm_dl_crit}. So it remains to check (2). In particular, we show
    \begin{equation} \label{eq_main_thm_1}
    \tcomod (i \circ i_n, i_j^n(g) \cdot i_0^n(\phi), \alpha_n) >\frac{\delta}{p_1p_2 \cdots p_j}
    \end{equation}
    for each $n \geq j$, which implies (2). 

    First for $n=j$. We need to show:
    \[B_G \left( i \circ i_j(g \cdot \phi), \frac{\delta}{p_1\cdots p_j} \right) \cap i \circ i_j[H_j] \subseteq i \circ i_j \left[B_{\Ho}\left(g \cdot \phi, \delta\right)\right]\]

    Let $p= i \circ i_j (p')$ with $d_{\sup}(p', g \cdot \phi) \geq \delta$. We claim $d_G(p, i \circ i_j(g \cdot \phi)) \geq  \frac{\delta}{p_1 \cdots p_j}$; this follows from considering any point $x= (x_n)_{n \in \N} \in K$ such that $\norm{p'(x_j)-g \cdot \phi(x_j)} \geq \delta$ and noting that
    \[d_K\left(p(x), i \circ i_j(g \cdot \phi)(x) \right) \geq \frac{\delta}{p_1\cdots p_j}\]
    by definition of the metric $d_K$, just by considering coordinate $j$ of $K$. This checks Equation \eqref{eq_main_thm_1} for $n=j$.

    Now for $n >j$ we proceed like this. Let $p = i \circ i_n (p')$ with $p' \in H_n$ and $d_{\sup}(p', i_j^n(g \cdot \phi)) \geq \frac{\delta}{p_{j+1}\cdots p_n}$. To prove that \eqref{eq_main_thm_1} holds we must prove that:
    
    \begin{equation}\label{eqn_main_thm_2}
        d_G\left(p, i \circ i_j(g \cdot \phi)\right) \geq \frac{\delta}{p_1\cdots p_j} 
    \end{equation}

    Apply Lemma \ref{lem3} to $p'$ and $i_j^n(g \cdot \phi)$ with $d= p_{j+1} \cdots p_n$. We have two cases depending on which of (1) or (2) of Lemma \ref{lem3} holds. 

    \textbf{Case 1}: there is $x' \in [0,1]$ such that
    \begin{equation}\label{eqn_main_thm_3}
    \norm{T_d \circ p'(x')- T_d \circ i_j^n(g \cdot \phi)(x')} \geq \delta
    \end{equation}
    Now, take $x= (x_n)_{n \in\N} \in K$ with $x_n=x'$. 

    Recall we use $\pi_k:K \to [0,1]$ as the projection onto the $k$th coordinate for any $k$. Note that
    \[\pi_j (p (x))=T_d \circ p'(x')\]
    and 
    \[\pi_j (i \circ i_j (g \cdot \phi)(x))= T_d \circ i_j^n(g \cdot \phi)(x')\]
    and thus, by considering just coordinate $j$ we have by \eqref{eqn_main_thm_3} that:
    \[d_K\left(p(x), i \circ i_j(g \cdot \phi)(x)\right) \geq \frac{\delta}{p_1\cdots p_j}\]
    which implies \eqref{eqn_main_thm_2} as desired.

    \textbf{Case 2:} there is some $x' \in [0,1]$ such that 
    \begin{equation}\label{eqn_main_thm_4}
        \norm{T_d \circ p'(x') -T_d \circ i_j^n(g \cdot \phi)(x')} \geq  \frac{\delta}{2}
    \end{equation}

    and 

    \begin{equation}\label{eqn_main_thm_4_b}
    \textrm{one of } T_d \circ p'(x') \textrm{ or } T_d \circ i_j^n(g \cdot \phi)(x') \in \{0,1\}
    \end{equation}

    Notice that if $\norm{T_d \circ p'(x') -T_d \circ i_j^n(g \cdot \phi)(x')} \geq \delta$, then we are done by Case 1, so we may assume:

    \begin{equation}\label{eqn_main_thm_5}
        \norm{T_d \circ p'(x') -T_d \circ i_j^n(g \cdot \phi)(x')} < \delta <\frac{1}{p_j}
    \end{equation}

    By \eqref{eqn_main_thm_4_b} and \eqref{eqn_main_thm_5}, we have that $T_d \circ p'(x')$ and $T_d \circ i_j^n(g \cdot \phi)(x')$ are in the same subinterval of the form $\left[\frac{i}{p_j}, \frac{i+1}{p_j}\right]$ of $[0,1]$. So:
    \[\norm{T_{p_j} \circ T_d \circ p'(x')- T_{p_j} \circ T_d \circ i_j^n(g \cdot \phi)(x')} = p_j \norm{T_d \circ p'(x')- T_d \circ i_j^n(g \cdot \phi)(x')} \geq \delta\]
    where the inequality above is by \eqref{eqn_main_thm_4} and the fact that $p_j \geq 2$. 

    Now let $x = (x_n)_{n\in\N} \in K$ with $x_n=x'$. By considering just coordinate $j-1$ (notice here we implicitly use that $j \geq 2$) one gets:
    \[d_K \left(p(x), i \circ i_j (g \cdot \phi)(x) \right) \geq \frac{\delta}{p_1\cdots p_{j-1}} \geq \frac{\delta}{p_1 \cdots p_{j-1}p_j}\]
    and so \eqref{eqn_main_thm_2} holds.   
\end{proof}

\printbibliography

\end{document}